\renewenvironment{abstract}
{\small\vspace{-1em}
\begin{center}
\bfseries\abstractname\vspace{-.5em}\vspace{0pt}
\end{center}
\list{}{
\setlength{\leftmargin}{0.6in}%
\setlength{\rightmargin}{\leftmargin}}%
\item\relax}
{\endlist}
\newcommand{\ev}[1]{#1}  %
\newcommand{\cv}[1]{}  %
   \newcommand{\sv}[1]{}  %
 \newcommand{\lv}[1]{#1}  %
\newcommand{\appendixText}{}
\declaretheorem[name=Theorem, numberwithin=section]{theorem}
\declaretheorem[name=Lemma, sibling=theorem]{lemma}
\declaretheorem[name=Corollary, sibling=theorem]{corollary}
\declaretheorem[name=Claim, sibling=theorem]{claim}
\declaretheorem[name=Question, style=remark, sibling=theorem]{question}
\def\cqedsymbol{\ifmmode$\lrcorner$\else{\unskip\nobreak\hfil
\penalty50\hskip1em\null\nobreak\hfil$\lrcorner$
\parfillskip=0pt\finalhyphendemerits=0\endgraf}\fi}
\DeclareMathOperator{\pe}{\mathbf{pe}}
\DeclareMathOperator{\Oh}{\mathcal{O}}
\let\le\leqslant
\let\ge\geqslant
\let\leq\leqslant
\let\geq\geqslant
\title{Path Eccentricity and Forbidden Induced Subgraphs\thanks{An extended abstract of this paper has been accepted to Eurocomb 2025~\cite{OurEC}.}}
\author[1]{Sylwia Cichacz\thanks{Supported by the AGH University of Krakow under grant no.~16.16.420.054, funded by the Polish Ministry of Science and Higher Education.}}
\author[2]{Claire Hilaire\thanks{Supported in part by the Slovenian Research and Innovation Agency (research projects J1-4008 and N1-0370).}}
  \author[3]{Tomáš Masařík\thanks{Supported by the Polish National Science Centre SONATA-17 grant number 2021/43/D/ST6/03312.}}
  \author[3]{\\Jana Masaříková\thanks{Supported by the Polish National Science Centre Preludium research project no.\ 2022/45/N/ST6/04232.}}
   \author[2]{Martin Milanič\thanks{Supported in part by the Slovenian Research and Innovation Agency (I0-0035, research program P1-0285 and research projects J1-3003, J1-4008, J1-4084, J1-60012, and N1-0370) and by the research program CogniCom (0013103) at the University of Primorska.}}
\affil[1]{Faculty of Applied Mathematics, AGH University of Krakow, Poland}
\affil[2]{FAMNIT and IAM, University of Primorska, Koper, Slovenia}
\affil[3]{University of Warsaw, Institute of Informatics, Poland}
\date{}
\begin{document}

\maketitle

\begin{abstract}
The path eccentricity of a connected graph $G$ is the minimum integer $k$ such that $G$ has a path such that every vertex is at distance at most $k$ from the path.
A result of Duffus, Jacobson, and Gould from 1981 states that every connected $\{\text{claw}, \text{net}\}$-free graph $G$ has a Hamiltonian path, that is, $G$ has path eccentricity~$0$.
Several more recent works identified various classes of connected graphs with path eccentricity at most $1$, or, equivalently, graphs having a spanning caterpillar, including connected $P_5$-free graphs, AT-free graphs, and biconvex graphs.
Generalizing all these results, we apply the work on structural distance domination of Bacs\'o and Tuza [Discrete Math., 2012] and characterize, for every positive integer $k$, graphs such that every connected induced subgraph has path eccentricity less than $k$.
More specifically, we show that every connected $\{S_{k}, T_{k}\}$-free graph has a path eccentricity less than $k$, where $S_k$ and $T_k$ are two specific graphs of path eccentricity $k$ (a subdivided claw and the line graph of such a graph).
As a consequence, every connected $H$-free graph has path eccentricity less than $k$ if and only if $H$ is an induced subgraph of $3P_{k}$ or $P_{2k+1} + P_{k-1}$. 
\ev{For such cases, we also provide a robust polynomial-time algorithm that finds a path witnessing the upper bound on the path eccentricity.}
Our main result also answers an open question of Bastide, Hilaire, and Robinson [Discrete Math., 2025].
\end{abstract}

\section{Introduction}

A \emph{Hamiltonian path} in a graph $G$ is a path in $G$ whose vertex set is $V(G)$; a Hamiltonian cycle is defined similarly. 
The problems of determining whether a given graph has a Hamiltonian path or a Hamiltonian cycle are both \textsf{NP}-complete~\cite{MR519066,MR378476}, which motivates the search for sufficient conditions for their existence (see, e.g., the survey~\cite{MR3143857}).
One of the early results in the area is the following result of Duffus, Jacobson, and Gould~\cite{Duffus1981} (see also~\cite{Shepherd1991,Brandstadt2003,Kelmans2006}).

\begin{theorem}\label{thm:clawnetfree}
Every connected $\{$claw, net$\}$-free graph has a Hamiltonian path.
\end{theorem}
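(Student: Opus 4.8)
The plan is to argue by contradiction via a longest-path argument, using claw-freeness to force a triangle and net-freeness (together with explicit path reroutings) to finish. Let $P = v_1 v_2 \cdots v_\ell$ be a longest path in $G$, and suppose for contradiction that $V(P) \neq V(G)$. Since $P$ is longest, every neighbor of each endpoint $v_1, v_\ell$ already lies on $P$; as $G$ is connected, there is therefore a vertex $u \notin V(P)$ adjacent to some internal vertex $v_i$ with $2 \le i \le \ell - 1$.

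First I would extract a triangle. Because $P$ is longest, $u$ cannot be adjacent to either $v_{i-1}$ or $v_{i+1}$: inserting $u$ between $v_i$ and such a neighbor (e.g.\ $v_1 \cdots v_i\, u\, v_{i+1} \cdots v_\ell$ if $u \sim v_{i+1}$) would lengthen $P$. Hence, applying claw-freeness to $v_i$ with the three neighbors $u, v_{i-1}, v_{i+1}$, the only edge that can occur among these leaves is $v_{i-1}v_{i+1}$, which must therefore be present. This yields a triangle $\{v_{i-1}, v_i, v_{i+1}\}$ carrying the pendant $u$ at $v_i$.

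Next I would dispose of the near-endpoint cases $i \in \{2, \ell-1\}$ directly: the new chord reroutes $P$ into a strictly longer path that also absorbs $u$ (for instance, when $i = 2$ the sequence $u\, v_2\, v_1\, v_3\, v_4 \cdots v_\ell$ is a path on $V(P) \cup \{u\}$), contradicting maximality. For the remaining internal case $3 \le i \le \ell - 2$, both $v_{i-2}$ and $v_{i+2}$ exist, and I would examine the candidate configuration consisting of the triangle $\{v_{i-1}, v_i, v_{i+1}\}$ together with the three pendants $v_{i-2}$, $u$, $v_{i+2}$ attached at $v_{i-1}$, $v_i$, $v_{i+1}$ respectively. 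The aim is a clean dichotomy: either all the required non-adjacencies hold, in which case this configuration is an induced net, contradicting net-freeness; or some forbidden cross-edge is present, in which case that edge, combined with the chord $v_{i-1}v_{i+1}$, lets me reroute $P$ into a strictly longer path. For example, if $u \sim v_{i-2}$, then $v_1 \cdots v_{i-2}\, u\, v_i\, v_{i-1}\, v_{i+1}\, v_{i+2} \cdots v_\ell$ covers $V(P) \cup \{u\}$, a contradiction. Either branch contradicts the choice of $P$, so $P$ must be Hamiltonian.

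The main obstacle is this final case analysis: one must verify, for \emph{every} cross-edge that would spoil the induced net (edges from $u$, $v_{i-2}$, or $v_{i+2}$ into the triangle, or among the pendants themselves), that it can be traded for a genuine lengthening of $P$ through the chord. Organizing these reroutings systematically---and likely re-invoking claw-freeness at $v_{i-1}$ and $v_{i+1}$ to control the neighborhoods of $v_{i-2}$ and $v_{i+2}$---is the technical heart of the argument. By contrast, the claw-free step that produces the triangle and the net-free step that closes the contradiction are comparatively routine once the bookkeeping of the reroutings is in place.
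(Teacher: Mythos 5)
The paper gives no proof of this statement; it is quoted directly from Duffus, Jacobson, and Gould, so your attempt must stand on its own. It does not: there is a genuine gap precisely at the step you defer as ``the technical heart,'' and the dichotomy you rely on there (induced net, or a cross-edge that can be traded for a longer path) is false. Concretely, suppose $4 \le i \le \ell-2$ and the only cross-edges present are $v_{i-2}v_i$ and $v_{i-2}v_{i+1}$. This is not an artificial case: once $v_{i-2}\sim v_i$, your own claw argument at $v_i$ with leaves $\{u, v_{i-2}, v_{i+1}\}$ (recall $u\not\sim v_{i+1}$) forces either $u\sim v_{i-2}$, which is your easy rerouting case, or $v_{i-2}\sim v_{i+1}$; so this is exactly the residual situation. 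In it, no path on $V(P)\cup\{u\}$ can be assembled from the edges you know to exist: in the graph formed by the path edges, the chord $v_{i-1}v_{i+1}$, and the edges $uv_i$, $v_{i-2}v_i$, $v_{i-2}v_{i+1}$, the three vertices $u$, $v_1$, $v_\ell$ all have degree one, and a path has only two endpoints. Hence no rerouting lengthens $P$, yet the configuration is not an induced net either.

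Worse, the six vertices $\{v_{i-2}, v_{i-1}, v_i, v_{i+1}, v_{i+2}, u\}$ then contain no induced claw and no induced net at all: $\{v_{i-2}, v_{i-1}, v_i, v_{i+1}\}$ is a clique, every neighborhood inside the configuration has independence number at most two, and any triangle in it admits at most the two valid pendants $u$ (at $v_i$) and $v_{i+2}$ (at $v_{i+1}$), whereas a net needs three. The forbidden subgraph that must exist lives \emph{outside} your configuration: for instance, the triangle $\{v_{i-2}, v_i, v_{i+1}\}$, whose edges are the two spoiler edges plus $v_iv_{i+1}$, together with pendants $v_{i-3}$, $u$, $v_{i+2}$ is an induced net --- but only if $v_{i-3}$ has no spoiling edges of its own, and if it does, one must pass to $v_{i-4}$, and so on. So the proof cannot be closed by a bounded case analysis on a fixed six-vertex window around $v_i$; it must iterate or induct along the path, which is the actual substance of the Duffus--Jacobson--Gould argument. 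Your opening steps are correct (endpoints of a longest path have all neighbors on $P$; claw-freeness at $v_i$ yields the chord $v_{i-1}v_{i+1}$; the cases $i\in\{2,\ell-1\}$ and the spoiler $u\sim v_{i-2}$ do reroute), but the part you label ``routine bookkeeping'' is where the theorem's difficulty lies, and as sketched it is not merely incomplete but based on a false claim.
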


Here, a \emph{claw} is the complete bipartite graph $K_{1,3}$, the \emph{net} is the graph obtained by appending a pendant edge to each vertex of the complete graph $K_3$, and, given a set $\mathcal{F}$ of graphs, a graph $G$ is said to be \emph{$\mathcal{F}$-free} if $G$ does not have an induced subgraph isomorphic to any member of $\mathcal{F}$.
If  $\mathcal{F}= \{F\}$ for some graph $F$, we also say that $G$ is \emph{$F$-free}.

The result of \Cref{thm:clawnetfree} is tight, since the claw and the net do not have a Hamiltonian path.
Let us remark that, in general, if $G$ is a graph that has a Hamiltonian path and $H$ is a connected induced subgraph of $G$, then $H$ may fail to have a Hamiltonian path.
For example, if $H$ is any $n$-vertex graph and $G$ is a graph obtained from $H$ by adding to it $n$ new vertices fully adjacent to the vertices of $H$, then $G$ has a Hamiltonian path, even though $H$ may fail to have one.
In particular, this construction could be applied to the claw and the net. 

If we instead require that all connected induced subgraphs have a Hamiltonian path, \Cref{thm:clawnetfree} implies the following characterization.

\begin{corollary}\label{cor:clawnetfree}
For every graph $G$, the following statements are equivalent:
\begin{itemize} 
\item Every connected induced subgraph of $G$ has a Hamiltonian path.
\item $G$ is $\{$claw, net$\}$-free.
\end{itemize}
\end{corollary}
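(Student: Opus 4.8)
The plan is to establish the two implications separately, and both follow almost immediately from \Cref{thm:clawnetfree} together with two elementary observations: that the class of $\{\text{claw},\text{net}\}$-free graphs is hereditary (closed under taking induced subgraphs), and that neither the claw nor the net admits a Hamiltonian path, a fact already recorded in the excerpt.

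For the direction from $\{\text{claw},\text{net}\}$-freeness to the Hamiltonian-path property, I would argue as follows. Suppose $G$ is $\{\text{claw},\text{net}\}$-free, and let $H$ be an arbitrary connected induced subgraph of $G$. Since being $\{\text{claw},\text{net}\}$-free is a hereditary property, $H$ is itself $\{\text{claw},\text{net}\}$-free; indeed, an induced claw or net in $H$ would also be an induced claw or net in $G$. As $H$ is connected and $\{\text{claw},\text{net}\}$-free, \Cref{thm:clawnetfree} applies directly and yields a Hamiltonian path in $H$. This holds for every such $H$, which gives the first bullet point.

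For the reverse direction I would use the contrapositive. Suppose $G$ is not $\{\text{claw},\text{net}\}$-free, so $G$ contains an induced subgraph $F$ isomorphic to the claw or to the net. Both the claw and the net are connected, so $F$ is a connected induced subgraph of $G$; and, as noted in the excerpt, neither has a Hamiltonian path. Hence $F$ is a connected induced subgraph of $G$ without a Hamiltonian path, so the first bullet point fails. Taking the contrapositive, if every connected induced subgraph of $G$ has a Hamiltonian path then $G$ must be $\{\text{claw},\text{net}\}$-free.

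I do not expect any genuine obstacle here: the statement is a clean corollary, with the substance entirely contained in \Cref{thm:clawnetfree}. The only points requiring (minimal) care are to invoke heredity explicitly in the forward direction and to verify that the two forbidden graphs are connected so that they qualify as connected induced subgraphs in the reverse direction; both are routine.
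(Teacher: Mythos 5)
Your proposal is correct and matches the paper's (implicit) argument: the paper derives \Cref{cor:clawnetfree} directly from \Cref{thm:clawnetfree}, using heredity of $\{\text{claw},\text{net}\}$-freeness for one direction and the fact that the claw and the net are connected graphs without Hamiltonian paths for the other, exactly as you do. No gaps; nothing further is needed.
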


In 2023, G\'omez and Guti\'errez~\cite{Gomez2023} generalized the concept of Hamiltonian paths by defining the notion of path eccentricity of graphs.
Given a connected graph $G$ and a path $P$ in $G$, the \emph{eccentricity} of $P$ is the maximum distance from a vertex in $G$ to a vertex in $P$.
Hence, in particular, a path $P$ has eccentricity $0$ if and only if $P$ is a Hamiltonian path, and $P$ has eccentricity at most $1$ if and only if $P$ is \emph{dominating}, that is, every vertex not in the path has a neighbor on the path.
More generally, given an integer $k\ge 0$, a path $P$ is said to be \emph{$k$-dominating} if it has eccentricity at most $k$.
The \emph{path eccentricity} of a connected graph $G$ is denoted by $\pe(G)$ and defined as the minimum eccentricity of a path in $G$.
In particular, $\pe(G)=0$ if and only if $G$ has a Hamiltonian path, and $\pe(G)\le 1$ if and only if $G$ has a dominating path.
It is not difficult to see that a graph $G$ has a dominating path if and only if $G$ has a \emph{spanning caterpillar}, that is, a spanning subgraph that is a caterpillar (a tree possessing a dominating path).
Hence, understanding graphs that admit dominating paths is motivated by their applications to graph burning and a variant of the cops and robber game.
More precisely:
\begin{itemize}
    \item The burning graph conjecture (see Bonato, Janssen, and Roshanbin~\cite{Bonato2016}) is known to be true for graphs admitting a spanning caterpillar.
    \item If a graph $G$ admits a spanning caterpillar $T$, then the minimum number of cops sufficient for the cop player to win on the graph $G$ against the robber moving at the speed of $s$ is at most $ps$ if $G$ is a subgraph of the graph obtained from $T$ by adding an edge between each pair of vertices that are at distance at most $p$ in $T$ (see Fomin, Golovach, Kratochvíl, Nisse, and Suchan \cite[Lemma~4]{Fomin2010}).
\end{itemize}

Since adding a universal vertex to any connected graph $H$ results in a graph $G$ with path eccentricity at most $1$, path eccentricity is not monotone under vertex deletion.
Nevertheless, and in line with the above discussion preceding \Cref{cor:clawnetfree}, which characterizes graphs each connected induced subgraph of which has path eccentricity $0$, analogous questions can be addressed for higher values of path eccentricity.

\begin{question}
For a positive integer $k$, what are the graphs $G$ such that each connected induced subgraph of $G$ has path eccentricity at most $k$?
\end{question}

For each fixed $k\ge 1$, there are two obstructions to path eccentricity less than $k$ that can be obtained from the claw and the net, as follows.
Given a positive integer $k$, let us denote by $S_k$ the \emph{$k$-subdivided claw}, that is, the graph obtained from the claw by replacing each of its edges with a path of length $k$, where the length of a path is the number of its edges (in particular, $S_1$ is the claw).
Furthermore, we denote by $T_k$ the \emph{$k$-subdivided net}, that is, the graph obtained by appending a path of length $k$ to each vertex of the complete graph $K_3$ (in particular, $T_1$ is the net).
\lv{See \cref{fig:S2T2} for an example.

\begin{figure}[h!]
    \centering
    \includegraphics[width=0.6\linewidth]{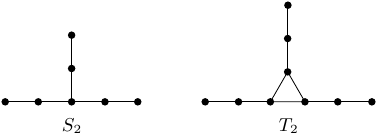}
    \caption{Visualization of graphs $S_2$ and $T_2$.}
    \label{fig:S2T2}
\end{figure}
}

It is easy to observe that \hbox{$\pe(S_k) = \pe(T_k) = k$} for each positive integer $k$.
Consequently, if $G$ is a graph such that each connected induced subgraph of $G$ has path eccentricity less than $k$, then $G$ is $\{S_{k},T_{k}\}$-free.
As the main result of this paper, we show that this necessary condition is in fact also sufficient. 
We thus obtain the following unifying theorem, valid for all positive integers $k$ (and thus capturing also \Cref{cor:clawnetfree}, which corresponds to the case $k = 1$).

\begin{restatable}{theorem}{mainThm}
  \label{thm:hereditary-path-eccentricity}
For every integer $k\ge 1$ and every graph $G$, the following statements are equivalent:
\begin{itemize}
  \item Every connected induced subgraph $H$ of $G$ satisfies $\pe(H)< k$.
  \item $G$ is $\{S_{k},T_{k}\}$-free.
\end{itemize}
\end{restatable}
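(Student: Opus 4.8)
The plan is to prove the two implications separately, with essentially all of the work concentrated in the backward direction. The forward implication is immediate and already anticipated in the text: since $\pe(S_k)=\pe(T_k)=k$, any induced copy of $S_k$ or $T_k$ inside $G$ would be a connected induced subgraph of path eccentricity $k\not<k$, so if every connected induced subgraph has path eccentricity less than $k$ then $G$ must be $\{S_k,T_k\}$-free. For the converse I would first reduce to a single-graph statement. Being $\{S_k,T_k\}$-free is a hereditary property, so every connected induced subgraph $H$ of a $\{S_k,T_k\}$-free graph is again $\{S_k,T_k\}$-free. It therefore suffices to establish the core claim: \emph{every connected $\{S_k,T_k\}$-free graph $G$ satisfies $\pe(G)<k$}, that is, $G$ admits a path within distance $k-1$ of every vertex, and then apply this claim to each connected induced subgraph $H$ of the original graph.

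To prove the core claim, the strategy is to produce a connected induced subgraph $D$ of $G$ that simultaneously (i) \emph{distance-$(k-1)$-dominates} $G$, meaning every vertex of $G$ lies at distance at most $k-1$ from $D$, and (ii) is itself $\{\text{claw},\text{net}\}$-free. Granting such a $D$, \Cref{thm:clawnetfree} supplies a Hamiltonian path $P$ of $D$. Because $D$ is an induced subgraph, the edges of $P$ are edges of $G$, so $P$ is a genuine path of $G$ with vertex set $V(D)$; and because $D$ distance-$(k-1)$-dominates $G$, every vertex of $G$ is within distance $k-1$ of $V(D)=V(P)$. Hence $P$ has eccentricity at most $k-1$, giving $\pe(G)<k$, as desired. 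This reduces the entire problem to the existence of a distance-$(k-1)$-dominating connected subgraph that is free of claws and nets.

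The heart of the argument, and the step I expect to be the main obstacle, is producing $D$ with properties (i) and (ii); this is exactly where the structural distance-domination machinery of Bacs\'o and Tuza enters. The guiding principle is a \emph{leg-extension} correspondence between the two scales: $S_k$ is precisely the claw $S_1$ with each of its three unit legs lengthened to length $k$, and $T_k$ is precisely the net $T_1$ with each of its three pendant edges lengthened to length $k$. Accordingly, one chooses $D$ to be an extremal (e.g.\ inclusion-minimal) connected distance-$(k-1)$-dominating set and argues the contrapositive of (ii): a claw or net induced inside $D$ would propagate, through the distance structure of $G$, to an induced $S_k$ or $T_k$. Concretely, minimality forces each ``leaf'' vertex of such a claw or net to own a private vertex that it alone dominates within distance $k-1$; a shortest, hence induced, path to that private vertex provides the extra $k-1$ edges needed to lengthen the corresponding leg from $1$ to $k$, so that appending the three such paths to the claw (respectively net) realizes an induced $S_k$ (respectively $T_k$) in $G$, contradicting $\{S_k,T_k\}$-freeness.

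The delicate points, which the Bacs\'o--Tuza framework is designed to control and where I expect the bulk of the casework to lie, are all bookkeeping constraints on these three appended shortest paths: they must be chosen so that their lengths are exactly $k-1$ (so the legs have the correct length $k$), so that they are internally disjoint and pairwise non-adjacent, and so that they have no chords to the remainder of the configuration, since otherwise the resulting subgraph, though present, need not be an \emph{induced} $S_k$ or $T_k$. Verifying these non-adjacencies, and handling the degenerate small cases where private distance-neighbors fall short of distance $k-1$, is the technical core; the distance-domination results of Bacs\'o and Tuza provide exactly the structural control over minimal distance-dominating sets needed to carry this out and to convert the claw/net obstructions cleanly into the $S_k/T_k$ obstructions at the scale $k-1$, thereby completing \Cref{thm:hereditary-path-eccentricity}.
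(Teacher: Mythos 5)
Your proposal is correct and follows essentially the same route as the paper: the forward direction via $\pe(S_k)=\pe(T_k)= k$, and the backward direction by producing a connected distance-$(k-1)$-dominating set that induces a $\{$claw, net$\}$-free subgraph and taking a Hamiltonian path of it via \Cref{thm:clawnetfree}. The only difference is presentational: where you sketch a minimality/private-neighbor argument and defer its delicate bookkeeping to the Bacs\'o--Tuza machinery, the paper simply invokes \Cref{thm:BT12distance} as a black box with $\mathcal{F}=\{S_1,T_1\}$, observing that $\mathcal{F}_{k-1}^{\text{\textleaf}}=\{S_{k},T_{k}\}$, which makes all of the casework you anticipate unnecessary.
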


The case $k = 2$ of \Cref{thm:hereditary-path-eccentricity}  characterizes graphs every connected induced subgraph of which has a dominating path, and implies the following.

\begin{corollary}\label{cor:k=1}
Every connected $\{S_2,T_2\}$-free graph has a dominating path.
\end{corollary}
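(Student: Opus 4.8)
The statement is the case $k = 2$ of \Cref{thm:hereditary-path-eccentricity}, read off for $G$ itself. Recall that $\pe(H) < 2$ means $\pe(H) \le 1$, which holds precisely when $H$ admits a dominating path. Since $\{S_2, T_2\}$-freeness is a hereditary property, a connected $\{S_2, T_2\}$-free graph $G$ satisfies the right-hand side of the equivalence in \Cref{thm:hereditary-path-eccentricity} (taken with $k = 2$); applying that equivalence and then instantiating its left-hand side at the connected induced subgraph $H = G$ yields $\pe(G) \le 1$, i.e.\ a dominating path. So at the level of the finished paper the corollary needs no argument beyond invoking the main theorem.

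All the content therefore sits in the nontrivial forward implication of \Cref{thm:hereditary-path-eccentricity}, and since by heredity the corollary (quantified over all connected $\{S_2,T_2\}$-free graphs) is equivalent to that implication for $k = 2$, let me outline how I would prove it directly if a self-contained argument were wanted: it suffices to show that every connected $\{S_2,T_2\}$-free graph has a dominating path. I would start from an extremal path $P = p_0 p_1 \cdots p_\ell$ --- say a longest induced path, or a shortest path realizing the diameter --- and suppose towards a contradiction that $P$ does not dominate $G$. Choosing a vertex $w$ undominated by $P$ and at minimum distance from $P$, connectivity supplies a short path $w z y$ with $y = p_i$ on $P$, $z \notin V(P)$, and $w$ adjacent to no vertex of $P$. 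The two sides of $P$ emanating from $p_i$, together with the arm $p_i z w$, form a candidate three-way branch centered at $p_i$.

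The main obstacle is promoting this candidate branch to a genuine induced copy of $S_2$ or $T_2$. Two things can go wrong. First, $P$ might be too short on one side of $p_i$, leaving an arm incomplete; this is controlled by the extremality of $P$ and, if needed, by relocating the branch center away from the endpoints. Second, the connector $z$ may have chords to $P$ (edges to $p_{i \pm 1}$ or $p_{i \pm 2}$) that destroy inducedness, and it is here that the dichotomy between the two forbidden graphs surfaces: a chord producing a triangle near $p_i$ steers the construction towards the subdivided net $T_2$, whereas the chordless situation yields the subdivided claw $S_2$. Carrying out this case analysis --- checking that the three arms are pairwise non-adjacent apart from the prescribed edges, and confirming that distance one (rather than some larger radius) is the correct domination target --- is the delicate core of the argument. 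This is exactly the bookkeeping that the Bacs\'o--Tuza structural distance-domination framework used in the proof of \Cref{thm:hereditary-path-eccentricity} organizes uniformly, which is why the paper routes the corollary through the general theorem rather than through such an ad hoc analysis.
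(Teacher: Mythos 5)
Your first paragraph is exactly the paper's proof: the corollary follows by instantiating \Cref{thm:hereditary-path-eccentricity} at $k=2$, noting that $\{S_2,T_2\}$-freeness lets you apply the equivalence and take $H=G$, and that $\pe(G)\le 1$ means precisely that $G$ has a dominating path. The additional sketch of a direct extremal-path argument is neither needed nor carried out in full (you acknowledge the case analysis is left open), but since the actual proof you give routes the corollary through the main theorem, your proposal is correct and takes essentially the same approach as the paper.
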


\Cref{cor:k=1} unifies and generalizes several results from the literature regarding sufficient conditions for the existence of a dominating path:
\begin{sloppypar}
\begin{enumerate}
    \item One of the oldest results along these lines is the result of Bacs\'o and Tuza in 1990 (see~\cite{Bacso1990}) stating that every connected $P_5$-free graph has a dominating clique or a dominating $P_3$ (we denote by $P_k$ the $k$-vertex path).
It follows that every connected $P_5$-free graph has a dominating path.
\item A graph $G$ is said to be \emph{AT-free} if it does not have an independent set $I$ such that $|I| = 3$ and for every vertex $v\in I$, the remaining two vertices in $I$ are in the same component of $G-N[v]$.
Every connected AT-free graph has a dominating path (see Corneil, Olariu, and Stewart~\cite{Corneil1997}, as well as~\cite{Corneil1995}).
\item A graph $G$ is \emph{biconvex} if it is bipartite, with parts $A$ and $B$ that can each be linearly ordered so that for each vertex $v$ of $G$, the neighborhood of $v$ in the part not containing $v$ forms a consecutive segment of vertices with respect to the linear ordering.
G\'omez and Guti\'errez~\cite{Gomez2023} and, independently Antony, Das, Gosavi, Jacob, and Kulamarva~\cite{Antony2024}, proved that every connected biconvex graph has a dominating path (or, equivalently, a spanning caterpillar).
\end{enumerate}
\end{sloppypar}
Indeed, each of the previously stated cases corresponds to a subclass of the class of $\{S_2,T_2\}$-free graphs, hence, \Cref{cor:k=1} implies all these results.

Further consequences of \cref{thm:hereditary-path-eccentricity} are related to a recent work of Bastide, Hilaire, and Robinson~\cite{BHR25}.
The authors explored a relation between the path eccentricity of a graph and a certain property of its adjacency matrix (or equivalently, on the neighborhoods of its vertices). 
They introduced the \emph{partially augmented consecutive ones property} (denoted \emph{*-C1P} for short): a graph has the *-C1P if there exists an ordering of its vertices sending the open or closed neighborhood of every vertex to a consecutive set.
The authors showed that graphs with the *-C1P have path eccentricity at most 2, and conjectured that they actually have path eccentricity at most 1.
We show that \cref{thm:hereditary-path-eccentricity} resolves the conjecture.

\begin{restatable}{theorem}{CoPthm}
  \label{th:C1P}
     If a graph $G$ has the *-C1P, then $ \pe(G)\leq 1$. 
\end{restatable}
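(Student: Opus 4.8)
The plan is to derive the theorem from the case $k=2$ of \Cref{thm:hereditary-path-eccentricity}, equivalently from \Cref{cor:k=1}. Since path eccentricity is defined only for connected graphs, I treat $G$ as connected. The whole statement then reduces to the single implication that every graph with the *-C1P is $\{S_2,T_2\}$-free: indeed, \Cref{cor:k=1} guarantees that a connected $\{S_2,T_2\}$-free graph has a dominating path, which is exactly $\pe(G)\le 1$.

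To make this a finite check, I would first show that the *-C1P is preserved under taking induced subgraphs. The one elementary fact needed is that if a vertex set $X$ is consecutive in a linear order $\sigma$, then $X\cap W$ is consecutive in the restricted order $\sigma|_W$ for every $W$, because anything lying strictly between two elements of $X$ in $\sigma$ already lies in $X$, so the same is true for $X\cap W$ inside $\sigma|_W$. Applying this with $X=N_G(v)$ or $X=N_G[v]$, and using $N_G(v)\cap W=N_H(v)$ and $N_G[v]\cap W=N_H[v]$ (the latter since $v\in W$), shows that the restriction of a witnessing order of $G$ witnesses the *-C1P for any induced subgraph $H$. Hence it suffices to prove that neither $S_2$ nor $T_2$ has the *-C1P, since heredity then excludes both as induced subgraphs of any *-C1P graph.

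Both verifications I would carry out by contradiction from a hypothetical witnessing order, exploiting that a degree-one vertex never constrains the order (its open neighborhood is a single vertex). For $S_2$, with center $u$, midpoints $m_1,m_2,m_3$, and leaves $\ell_1,\ell_2,\ell_3$, the only constraints come from $u$ and from the $m_i$ (with $N(m_i)=\{u,\ell_i\}$). The pivot is that $u$ has at most two order-neighbors, so at most two arms can use their open neighborhood (which forces $\ell_i$ next to $u$); a short analysis of how many arms are handled by the open versus the closed neighborhood---using that each closed block $\{m_i,u,\ell_i\}$ fixes $u$ to one side and that two such blocks would have to occupy opposite sides of $u$---rules out all cases. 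For $T_2$ I would pivot on the triangle vertex $y$ lying between the other two triangle vertices $x,z$ in the order. Its open neighborhood $\{x,z,y_1\}$ cannot be a block, because a block containing both $x$ and $z$ spans $y$'s position yet leaves no room for $y$; so $N[y]=\{x,y,z,y_1\}$ must be a block of four consecutive positions, of which only four internal arrangements respect the relative order of $x,y,z$. In each arrangement the pendant vertex $y_1$ (with $N(y_1)=\{y,y_2\}$ and $y_2$ a leaf outside the block) can be neither open- nor closed-good, eliminating every arrangement.

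The main obstacle is precisely these two finite case analyses, and in particular choosing the pivot vertex well: for $T_2$ the decisive move is to localize everything at the middle triangle vertex $y$ and then to use the rigidity imposed by its pendant path through $y_1$, since the leaves carry no information and all the tension sits on the triangle and the first pendant vertices. Once $S_2$ and $T_2$ are known to lack the *-C1P, combining heredity with \Cref{cor:k=1} yields $\pe(G)\le 1$ at once.
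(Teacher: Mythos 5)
Your proposal is correct, and its skeleton is the same as the paper's: establish that the *-C1P is hereditary under induced subgraphs, exclude suitable forbidden induced subgraphs, and invoke \cref{thm:hereditary-path-eccentricity} with $k=2$ (equivalently \cref{cor:k=1}). The genuine difference is in the key lemma and how it is proved. The paper's \cref{claim:C1P} shows that $S_2$ and $T_1$ --- the net itself, not the subdivided net $T_2$ --- lack the *-C1P; this is a sharper exclusion, since $T_1$ is an induced subgraph of $T_2$ and hence $T_1$-freeness already gives $T_2$-freeness. Moreover, the paper's argument is unified and short: in either graph, the three vertices $a,b,c$ adjacent to the leaves have pairwise intersecting consecutive sets $N_a,N_b,N_c$ (whichever of the open or closed neighborhoods is chosen), and pairwise intersecting intervals in a linear order have the Helly-type property that one of them is contained in the union of the other two; but the leaf $a'$ satisfies $a'\in N(a)\subseteq N_a$ while $a'\notin N[b]\cup N[c]$, a contradiction. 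Your two bespoke case analyses (pivoting on the center of $S_2$, and on the triangle vertex of $T_2$ lying between the other two in the order) are more elementary and do close in every case --- the count of open versus closed arms at the center of $S_2$, and the four internal arrangements of the block $\{x,y,z,y_1\}$ for $T_2$, all lead to contradictions as you claim --- but they are longer, graph-specific, and analyze a larger graph ($T_2$) than necessary. On the other hand, your write-up makes explicit something the paper dismisses as immediate: the proof that the *-C1P passes to induced subgraphs, via the observation that the restriction of a consecutive set to any subset of positions remains consecutive in the restricted order and that restriction maps open/closed neighborhoods of $G$ to open/closed neighborhoods of the induced subgraph.
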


Another result from~\cite{BHR25} is related to the following generalization of AT-freeness.
A graph $G$ is said to be \emph{$k$-AT-free} if $G$ does not have an independent set $I$ such that $|I| = 3$ and for every vertex $v\in I$, the remaining two vertices in $I$ are in the same component of the graph $G-\{u\colon u$ is at distance at most $k$ of $v\}$.
In particular, a graph is AT-free if and only if it is $1$-AT-free.
In~\cite{BHR25}, the authors proved that for every $k\geq 1$, if a graph $G$ is $k$-AT-free, then $\pe(G)\leq k$.
This result follows from \cref{thm:hereditary-path-eccentricity}.
Indeed, it is easy to check that for every $k\ge 2$, the graphs $S_{k}$ and $T_{k-1}$ are not $(k-1)$-AT-free. Thus, if a graph $G$ is $(k-1)$-AT-free, then $G$ is  $\{S_{k},T_{k}\}$-free, which implies by \cref{thm:hereditary-path-eccentricity} that $\pe(G)< k$. 

\bigskip

For the case where a single induced subgraph is excluded, \Cref{thm:hereditary-path-eccentricity} leads to the following characterization.
The symbol $+$ denotes the disjoint union of graphs, and for an integer $k\ge 0$ and a graph $H$, we denote by $kH$ the disjoint union of $k$ copies of $H$.

\begin{restatable}{corollary}{mainCor}\label{cor:H-freeGeneral}
Let $H$ be a graph and $k\ge 1$.
Then, the following statements are equivalent:
\begin{itemize}
  \item Every connected $H$-free graph $G$ has $\pe(G)< k$.
  \item  $H$ is an induced subgraph of $3P_{k}$ or $P_{2k+1}+P_{k-1}$.
\end{itemize}
\end{restatable}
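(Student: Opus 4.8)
The plan is to deduce \Cref{cor:H-freeGeneral} from \Cref{thm:hereditary-path-eccentricity} by reducing it to a purely combinatorial question about the common induced subgraphs of $S_k$ and $T_k$. Throughout, write $F\sqsubseteq H$ to mean that $F$ is isomorphic to an induced subgraph of $H$. First I would reduce the first item of the corollary to the condition ``$H\sqsubseteq S_k$ and $H\sqsubseteq T_k$''. For the backward implication, let $G$ be a connected $H$-free graph: if $S_k\sqsubseteq G$ then $H\sqsubseteq S_k\sqsubseteq G$, contradicting $H$-freeness, and similarly for $T_k$, so $G$ is $\{S_k,T_k\}$-free; by \Cref{thm:hereditary-path-eccentricity} every connected induced subgraph of $G$ then has path eccentricity less than $k$, and in particular $\pe(G)<k$. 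For the forward implication, recall that $S_k$ and $T_k$ are connected with $\pe(S_k)=\pe(T_k)=k$: if, say, $H\not\sqsubseteq S_k$, then $S_k$ is itself a connected $H$-free graph with $\pe(S_k)=k\not<k$, so the first item fails; thus the first item forces $H\sqsubseteq S_k$ and, symmetrically, $H\sqsubseteq T_k$. It therefore remains to prove the combinatorial equivalence ``$H\sqsubseteq S_k$ and $H\sqsubseteq T_k$'' $\iff$ ``$H\sqsubseteq 3P_k$ or $H\sqsubseteq P_{2k+1}+P_{k-1}$''.

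The backward direction of this equivalence is immediate once we observe that both $3P_k$ and $P_{2k+1}+P_{k-1}$ are induced subgraphs of each of $S_k$ and $T_k$, since $\sqsubseteq$ is transitive. Indeed, deleting the degree-$3$ center of $S_k$, respectively the three triangle vertices of $T_k$, leaves exactly $3P_k$; and each of $S_k,T_k$ contains an induced $P_{2k+1}+P_{k-1}$ (in $S_k$: the $(2k+1)$-vertex path running through the center along two legs, together with the third leg minus its center-neighbour; in $T_k$: a $(2k+1)$-vertex path that traverses one full leg into its triangle vertex, crosses a triangle edge, and continues along all but the last vertex of a second leg, together with $k-1$ consecutive vertices of the third leg).

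For the forward direction, suppose $H\sqsubseteq S_k$ and $H\sqsubseteq T_k$. The first step is to show that $H$ is a linear forest: as an induced subgraph of the tree $S_k$ it is a forest, and it has maximum degree at most $2$, because a degree-$3$ vertex would map in $T_k$ to a triangle vertex and thus force two mutually adjacent vertices into $H$, creating a triangle in the forest $H$ -- a contradiction. The second step fixes an embedding of $H$ into $S_k$ and distinguishes whether the center $c$ is used. If $c\notin V(H)$, then $H\sqsubseteq S_k-c=3P_k$ and we are done. If $c\in V(H)$, let $C$ be the component of $H$ containing $c$; it runs along at most two legs, with arm-lengths $a$ and $b$, so $C\cong P_{a+b+1}$, and the rest of $H$ lives in the three pairwise non-adjacent leg-remainders, which are induced paths on at most $k-a-1$, $k-b-1$, and $k-1$ vertices; denote by $L_1,L_2,L_3$ the corresponding parts of $H$. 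To re-embed into $P_{2k+1}+P_{k-1}$ I would use the elementary fact that a linear forest $F$ with components $P_{q_1},\dots,P_{q_r}$ satisfies $F\sqsubseteq P_N$ if and only if $w(F)\le N+1$, where $w(F)=\sum_i(q_i+1)=|V(F)|+r$; in particular $w(L_i)$ is bounded by the capacity of the leg-remainder containing it. Placing $C,L_1,L_2$ into the $P_{2k+1}$-part yields weight $w(C)+w(L_1)+w(L_2)\le(a+b+2)+(k-a)+(k-b)=2k+2=(2k+1)+1$, while $L_3$ has weight $w(L_3)\le k=(k-1)+1$ and so fits into the $P_{k-1}$-part; hence $H\sqsubseteq P_{2k+1}+P_{k-1}$.

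I expect this last packing step to be the main obstacle, since it must be carried out uniformly over all ways the non-central components of $H$ can be split among the three legs -- including arbitrarily many small components, the degenerate arm-lengths $a=0$ or $b=0$, and empty leg-remainders that occur for small $k$. The weight inequality ``$|V(F)|+(\#\text{components})\le N+1$'' is precisely the bookkeeping device that absorbs the number of components into a single quantity, making the two bin computations tight at $2k+2$ and $k$ and thereby completing the argument.
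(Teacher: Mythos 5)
Your proof is correct, and its overall structure is the paper's: you reduce the corollary, via \Cref{thm:hereditary-path-eccentricity} together with the fact that $\pe(S_k)=\pe(T_k)=k$, to the purely combinatorial equivalence that the paper isolates as \cref{lem: IS both SkTk}, and both your backward direction (embedding $3P_k$ and $P_{2k+1}+P_{k-1}$ into $S_k$ and $T_k$) and your argument that $H$ must be a linear forest (a degree-$3$ vertex would map to a triangle vertex of $T_k$, forcing a triangle inside the forest $H$) are exactly the paper's. The genuine difference is in the step you flag as the main obstacle. The paper needs no packing at all: fix an embedding of $H$ into $S_k$ and use $\Delta(H)\le 2$; either the center $c$ of $S_k$ is not used, in which case $H$ is an induced subgraph of $S_k-c\cong 3P_k$, or $c$ is used, but then at least one of its three $S_k$-neighbours, say $u$, is missed (an induced copy of $H$ can place at most two of them next to $c$), in which case $H$ is an induced subgraph of $S_k-u\cong P_{2k+1}+P_{k-1}$. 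That is, the given embedding already lands inside one of the two target graphs after deleting a single vertex of $S_k$, so $H$ never has to be re-embedded. Your route instead decomposes $H$ into the central component $C$ and the leg-remainders $L_1,L_2,L_3$ and re-packs them using the criterion that a linear forest $F$ embeds as an induced subgraph of $P_N$ if and only if $|V(F)|+c(F)\le N+1$, where $c(F)$ is the number of components. This is correct as written --- the weight function does absorb the degenerate cases $a=0$, $b=0$ and empty remainders, and your two bin computations $2k+2$ and $k$ are exactly tight --- and the embedding criterion is a handy reusable tool; but here it replaces a one-line deletion observation with nontrivial bookkeeping, so the paper's argument is the one worth internalizing.
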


In the case of $3P_2$-free graphs, we obtain a much stronger property. 
We can find a path with eccentricity at most $1$ that is a longest path in the graph.
While the proof of this result implies that we can transform a longest path into a longest path that is dominating in polynomial time, it is \textsf{NP}-hard to find a longest path in $3P_2$-free graphs.
In fact, determining whether a given graph has a Hamiltonian path is \textsf{NP}-complete in the class of split graphs \cite{MR1405031}, a subclass of $2P_2$-free graphs.

\begin{restatable}{theorem}{longestThm}
  \label{thm:longest-path}
Every connected $3P_{2}$-free graph $G$ has a longest path that is dominating.
\cv{
Moreover, we can find a (not necessarily longest) dominating path in $G$ in time $\mathcal{O}(n^2(n+m))$, where $n$ is the number of vertices and $m$ is the number of edges in $G$.
}
\end{restatable}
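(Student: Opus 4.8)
The plan is to take a longest path $P=v_1\cdots v_\ell$ of $G$ and show that, after possibly replacing it by another longest path, it is dominating. As a source of intuition, note first that $3P_2$ is an induced subgraph of both $S_2$ and $T_2$ (the three ``outer'' edges of each form an induced matching), so every connected $3P_2$-free graph is $\{S_2,T_2\}$-free and hence has a dominating path by \Cref{thm:hereditary-path-eccentricity}; the real content is to realise such a path as a longest one. I would therefore fix, among all longest paths of $G$, one maximising the number of dominated vertices $|N[V(P)]|$, and argue by contradiction that it dominates everything.

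Assume $P$ is not dominating. Since $G$ is connected, taking an undominated vertex at minimum distance from $V(P)$ (which is then exactly $2$) produces an undominated vertex $u$ adjacent to a dominated vertex $w$; as $u$ has no neighbour on $P$ we have $w\notin V(P)$, while $w$ has a neighbour $v_j\in V(P)$. Thus $uw$ is an edge disjoint from $P$ with $u\not\sim V(P)$. The two clean structural facts I would establish next (assuming $\ell\ge 4$, the small cases being immediate) are: (i) since $P$ is longest, $N(v_1),N(v_\ell)\subseteq V(P)$, so in particular $w\not\sim v_1,v_\ell$; and (ii) $w\not\sim v_2$ and $w\not\sim v_{\ell-1}$, because an edge $wv_2$ would yield the strictly longer path $u,w,v_2,v_3,\dots,v_\ell$ on $\ell+1$ vertices, and symmetrically for $wv_{\ell-1}$. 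By the same token $v_1\not\sim v_\ell$: otherwise $v_1\cdots v_\ell v_1$ is a cycle through all of $V(P)$, and prepending $u,w,v_j$ and then traversing it gives a path on $\ell+2$ vertices. Hence $\{uw\}$ together with the two extreme edges $v_1v_2$ and $v_{\ell-1}v_\ell$ fails to be an induced $3P_2$ only on account of a chord in $\{v_1,v_2\}\times\{v_{\ell-1},v_\ell\}$, and the only chords not yet excluded are $v_1v_{\ell-1}$, $v_2v_{\ell-1}$, and $v_2v_\ell$.

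The remaining, and hardest, part is to rule out these three chords. I would do this with Pós\'a-type rotations: a chord such as $v_1v_{\ell-1}$ lets one reverse a segment of $P$ to obtain another longest path with a new endpoint (for instance $v_\ell,v_{\ell-1},v_1,v_2,\dots,v_{\ell-2}$), to which facts (i)--(ii) apply afresh, forbidding $w$ from being adjacent to the newly exposed near-endpoint vertices. Formalising this, I would let $\mathcal{E}$ be the set of all vertices occurring as an endpoint of some longest path obtained from $P$ by rotations, and prove the invariants that every $y\in\mathcal E$ satisfies $N(y)\subseteq V(P)$ and $y\not\sim w$ — the latter because an edge $yw$ would extend the corresponding longest path through $w$ to $u$. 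Combining the rotation closure of $\mathcal E$ with $3P_2$-freeness (which bounds how many pairwise non-adjacent path-edges can coexist, and so constrains the chord structure that the rotations exploit) should force $w$ to be non-adjacent to \emph{every} neighbour it has on $P$, contradicting $w\sim v_j$. Alternatively, whenever a rotation together with the edges $wv_j$ and $wu$ can be spliced into a length-$\ell$ path that additionally dominates $u$ without losing any previously dominated vertex, this contradicts the maximality in the choice of $P$. The main obstacle is exactly this rotation bookkeeping: showing that the chord configurations permitted by $3P_2$-freeness are always resolvable either into a strictly longer path or into an equally long path dominating strictly more vertices, and that the process terminates. I expect the case analysis for the three chord types, handled simultaneously from both ends of $P$, to be the technical heart of the argument.
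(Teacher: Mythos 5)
Your setup is sound and in fact coincides with the paper's own opening moves: the extremal choice of $P$ (longest, breaking ties by maximizing the number of dominated vertices), the vertex at distance exactly $2$ with its attaching edge $uw$ and attachment point $v_j$, the facts $N(v_1),N(v_\ell)\subseteq V(P)$, $w\not\sim v_1,v_2,v_{\ell-1},v_\ell$, $v_1\not\sim v_\ell$, and the conclusion that $3P_2$-freeness leaves only the three chords $v_1v_{\ell-1}$, $v_2v_{\ell-1}$, $v_2v_\ell$ to deal with. But at exactly that point the proposal stops being a proof: you defer the chords to an unspecified P\'osa-rotation scheme whose ``bookkeeping'' you yourself flag as unresolved, so the technical heart is missing. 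Moreover, the rotation plan as stated aims at the wrong target: the chord $v_2v_{\ell-1}$ \emph{cannot} be ruled out by any longest-path argument, because it is perfectly consistent with $P$ being longest --- in fact, once the other chords are excluded, $3P_2$-freeness \emph{forces} $v_2v_{\ell-1}\in E(G)$, and the proof must use that edge constructively rather than derive a contradiction from it.

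The missing idea, which makes rotations unnecessary, is to route detours through the attachment $u\,w\,v_j$ at the \emph{internal} vertex $v_j$ (note $3\le j\le \ell-2$ by your facts (i)--(ii)). If $v_2v_\ell\in E(G)$, then $u,\,w,\,v_j,\,v_{j-1},\dots,v_2,\,v_\ell,\,v_{\ell-1},\dots,v_{j+1}$ is a path on $\ell+1$ vertices, contradicting that $P$ is longest; the chord $v_1v_{\ell-1}$ is symmetric, and $v_1v_\ell$ gives an $(\ell+2)$-vertex path the same way (this is your cycle argument). Hence the sixtuple $\{u,w,v_1,v_2,v_{\ell-1},v_\ell\}$ would induce a $3P_2$ unless $v_2v_{\ell-1}\in E(G)$, so that edge exists, and then the reroute $u,\,w,\,v_j,\,v_{j-1},\dots,v_2,\,v_{\ell-1},\,v_{\ell-2},\dots,v_{j+1}$ is a path on exactly $\ell$ vertices containing $V(P)\setminus\{v_1,v_\ell\}\cup\{u,w\}$. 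Since all neighbors of $v_1$ and $v_\ell$ lie on $P$, this path dominates everything $P$ dominates and additionally $u$, contradicting your tie-breaking choice of $P$. (The paper phrases this last case not as a contradiction but as an improvement step, iterated at most $n$ times, which is what yields its algorithmic corollary.) So the gap is real but local: replace the rotation machinery by these three explicit detours and your argument closes, becoming essentially the paper's proof.
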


\begin{SCfigure}[50][h!]
\centering
\includegraphics[width=0.35\textwidth]{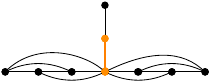}
\caption{An example of a $\{P_5,4P_2\}$-free graph no longest path of which is dominating.
The vertices depicted in orange need to belong to every dominating path; however, no longest path contains the top orange vertex.}
\label{fig:P5P1}
\end{SCfigure}
This is a very fine line, as this property is completely violated in the case of $\{S_2,T_2\}$-free graphs or even $\{P_5, 4P_2\}$-free graphs; see \cref{fig:P5P1}.
It also cannot be generalized to higher eccentricity; see for example, a $\{3P_3,P_7+P_2\}$-free graph in \cref{fig:3P3}.
Moreover, in a $3P_2$-free graph, there might exist a longest path that is not dominating, as depicted in \cref{fig:3P2}.

\begin{SCfigure}[60][h!]
\centering
\includegraphics[width=0.45\textwidth]{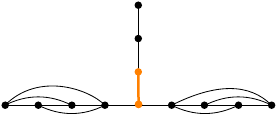}
\caption{An example of a $\{3P_3,P_7+P_2\}$-free graph no longest path of which is $2$-dominating.
The vertices depicted in orange need to belong to every $2$-dominating path; however, no longest path contains the top orange vertex.}
\label{fig:3P3}
\end{SCfigure}
\begin{SCfigure}[60][h!]
\centering
\includegraphics[width=0.22\textwidth]{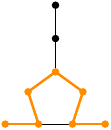}
\caption{An example of $3P_2$-free graph with one of the longest paths (depicted in orange) that is not dominating.}
  \label{fig:3P2}
\end{SCfigure}

\cv{
While \cref{thm:longest-path} actually provides us with a polynomial-time algorithm that finds a dominating path, the algorithm does not necessarily find a \textsl{longest} path that is dominating, unless we are given another longest path on the input.
}
\ev{
Note that for $3P_2$-free graphs, the algorithm does not necessarily find a \textsl{longest} path that is dominating, unless we are given another longest path on the input.
The ideas in the proof of \cref{thm:longest-path} can be generalized to the setting of \Cref{cor:H-freeGeneral}, providing us with a polynomial-time algorithm that finds a $(k-1)$-dominating path.
  The algorithm is \textsl{robust} in the sense of Spinrad and Raghavan~\cite{RaghavanS03}: it works on any input graph $G$, not necessarily $3P_k$-free or ($P_{2k+1}+P_{k-1}$)-free, and either correctly finds \textsl{both} of these induced subgraphs, or a path with eccentricity less than $k$.
Moreover, the integer $k$ can be part of the input, and the running time is independent of $k$.

To state our next result, we require a generalization of the graphs $S_k$ and $T_k$ to a broader class of graphs, represented in \cref{fig:S-T-M} (three leftmost figures) and defined as follows. 
Following the notation introduced in \cite{DDHMPT24}, we denote by $\mathcal{S}$ the class of subdivisions of the graph $K_{1,3}$. The class $\mathcal{T}$ is the class of graphs that can be obtained from three paths of length at least one by selecting one endpoint of each path and adding three edges between those endpoints to create a triangle.
The class $\mathcal{M}$ is the class of graphs $H$ that consist of a path $P$ and a vertex $a$, called the \emph{center} of $H$, such that $a$ is nonadjacent to the endpoints of $P$ and $a$ has at least two neighbors in $P$.
Given a graph $H\in \mathcal{S}\cup\mathcal{T}\cup\mathcal{M}$, the \emph{extremities} of $H$ are the vertices of degree one as well as the center of $H$ in case $H\in\mathcal{M}$. 
Observe that any $H\in \mathcal{S}\cup\mathcal{T}\cup\mathcal{M}$ has exactly three extremities.
Note in particular that for every $k\geq 1$, $S_k\in \mathcal{S}$, and $T_k\in \mathcal{T}$.
For every $k\geq 1$, we denote by $\mathcal{M}_k$ the family of graphs obtained by replacing each extremity of a graph $M\in \mathcal{M}$ by the extremity of a $P_k$.

\begin{figure}[htb]
  \centering
  \begin{tikzpicture}[scale=.8]
    \tikzset{every node/.style={draw,circle,fill=black,inner sep=0pt,minimum size=5pt,very thick}}
    \tikzstyle{vtxExtremity}=[fill=white, minimum size=6pt]
    \tikzstyle{edgePath}=[decorate,decoration={snake}]
    \tikzstyle{edgePk}=[draw=blue,dashed,very thick]
    \tikzset{decoration={amplitude=.5mm,segment length=2.25mm,post length=0mm,pre length=0mm,amplitude=.5mm}}
    \begin{scope}[]
      \node[vtxExtremity] (b) at (2,0) {};
      \node[vtxExtremity] (c) at (0,2) {};
      \node[vtxExtremity] (d) at (-2,0) {};
      \node (a) at (0,0) {};
      \draw[edgePath] (d) to (b);
      \draw[edgePath] (a) to (c);
      \node[rectangle,draw=none,fill=none] at (0,-.5) {$\strut G \in \mathcal{S}$};
    \end{scope}
    \begin{scope}[xshift=5cm]
      \node[vtxExtremity] (b) at (2,0) {};
      \node[vtxExtremity] (c) at (0,2) {};
      \node[vtxExtremity] (d) at (-2,0) {};
      \node (ac) at (0,1) {};
      \node (ab) at (0.7,0) {};
      \node (ad) at (-0.7,0) {};
      \draw (ac) to (ab) to (ad) to (ac);
      \draw[edgePath] (ab) to (b);
      \draw[edgePath] (ac) to (c);
      \draw[edgePath] (ad) to (d);
      \node[rectangle,draw=none,fill=none] at (0,-.5) {$\strut G \in \mathcal{T}$};
    \end{scope}
    \begin{scope}[xshift=10cm]
      \node[vtxExtremity] (b) at (2,0) {};
      \node[vtxExtremity] (c) at (0,2) {};
      \node[vtxExtremity] (d) at (-2,0) {};
      \node (ab) at (0.8,0) {};
      \node (ad) at (-0.8,0) {};
      \draw[edgePath] (b) to (d);
      \draw (ab) to (c) to (ad);
      \draw[dotted] (0,0) to (c) to (0.4,0)  (-0.4,0) to (c);
      \node[rectangle,draw=none,fill=none] at (0,-.5) {$\strut G \in \mathcal{M}$};
    \end{scope}
    \begin{scope}[xshift=15cm]
      \node[fill=blue] (b) at (2,0) {};
      \node[fill=blue] (c) at (0,1.5) {};
      \node[fill=blue] (d) at (-2,0) {};
      \node[fill=blue] (bb) at (2,1) {};
      \node[fill=blue] (cc) at (0,2.5) {};
      \node[fill=blue] (dd) at (-2,1) {};
      \node (ab) at (0.8,0) {};
      \node (ad) at (-0.8,0) {};
      \draw[edgePath] (b) to (d);
      \draw[edgePk] (b) to (bb);
      \draw[edgePk] (c) to (cc);
      \draw[edgePk] (d) to (dd);
      \draw (ab) to (c) to (ad);
      \draw[dotted] (0,0) to (c) to (0.4,0)  (-0.4,0) to (c);
      \node[rectangle,draw=none,fill=none] at (0,-.5) {$\strut G \in \mathcal{M}_k$};
    \end{scope}
  \end{tikzpicture}
  \caption{The three leftmost figures are representations of graphs in $\mathcal{S}$, $\mathcal{T}$, and $\mathcal{M}$, respectively from left to right, where the extremities are represented with white vertices; the rightmost figure represents a graph in $\mathcal{M}_k$, for some $k\geq 1$, where each dashed blue edge represents a $P_k$. 
  In all these figures, solid edges represent edges of the graph, wavy edges represent paths of length at least $1$, and dotted edges may be present or not.}
  \label{fig:S-T-M}
\end{figure}

We are now ready to state our next theorem.
\begin{restatable}{theorem}{thmAlg}\label{thm:3P2}
 There is an algorithm running in time $\mathcal{O}(n^2(n+m))$ that, given a connected graph $G$ with $n$ vertices and $m$ edges, and an integer $k\ge 1$, finds one of the following:
  \begin{itemize}
    \item a path with eccentricity less than $k$ in $G$, or
    \item an induced subgraph $H$ of $G$ isomorphic to either $S_k$, $T_k$, or a graph in $\mathcal{M}_k$.
  \end{itemize}  

\end{restatable}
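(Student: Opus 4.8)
The plan is to maintain a simple path $P$ in $G$ together with a lexicographic potential $\mu(P) = (|D(P)|,\,|V(P)|)$, where $D(P)$ is the set of vertices at distance at most $k-1$ from $P$, and, extending the approach behind \cref{thm:longest-path}, to repeatedly either certify that $P$ already has eccentricity less than $k$, strictly increase $\mu(P)$, or halt by exhibiting an induced obstruction. Each round begins with a multi-source breadth-first search rooted at $V(P)$, which in time $\mathcal{O}(n+m)$ computes every distance $\mathrm{dist}(u,P)$ and hence the eccentricity of $P$. If $D(P)=V(G)$ we return $P$. Otherwise the BFS yields a vertex $v$ with $\mathrm{dist}(v,P)\ge k$ together with a shortest path $R=r_0r_1\cdots r_d$ from $v=r_0$ to $x=r_d\in V(P)$, where $d\ge k$. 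Because $R$ is shortest, $\mathrm{dist}(r_i,P)=d-i$ for every $i$, so the interior vertices $r_0,\dots,r_{d-2}$ are nonadjacent to $P$, while the additional neighbours of $r_{d-1}$ on $P$ (if any) are exactly what will feed the obstruction analysis; in particular the length-$k$ suffix $Q=r_{d-k}\cdots r_d$ is a pendant path attached to $P$ at $x$.

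The engine of the algorithm is a structural step that, given $P$, the anchor $x$, and the pendant path $Q$, runs in time $\mathcal{O}(n+m)$ and returns either a modification $P'$ with $\mu(P')>\mu(P)$ or one of the three induced obstructions. When $x$ is an endpoint of $P$, the concatenation $P'=r_0\cdots r_{d-1}\,x\,\cdots$ is a simple path that strictly lengthens $P$ and can only enlarge $D$, so $\mu$ increases and we iterate. When $x$ is internal and both sub-arms $A,B$ of $P$ leaving $x$ have length at least $k$, we truncate them to length $k$ and consider the three induced paths $A$, $B$, $Q$ of length $k$ meeting at $x$; the output is then dictated by the adjacencies among them. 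If these three paths are pairwise nonadjacent except at $x$, their union is an induced $S_k\in\mathcal{S}$. If, reading outward from $x$, the earliest cross-adjacencies occur between the three vertices immediately following $x$ and close a triangle, we obtain an induced $T_k\in\mathcal{T}$. If instead some vertex has at least two neighbours on one of the three long paths, that vertex serves as the center and, together with the appropriate sub-paths and their length-$k$ extensions, yields an induced member of $\mathcal{M}_k$ (cf.\ \cref{fig:S-T-M}).

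Proving correctness of the structural step is the main obstacle, and it is here that the proof of \cref{thm:hereditary-path-eccentricity} together with the Bacs\'o--Tuza distance-domination framework is used. Two things must be established. First, the boundary cases must reduce to a $\mu$-increase: when an arm has length less than $k$, the anchor $x$ lies within distance $k-1$ of an end of $P$, and one shows that $P$ can then be rerouted through a prefix of $Q$ so that the far vertex $v$ enters $D$ while no vertex leaves it (the discarded arm being short enough to remain dominated by $x$), strictly increasing $\mu$. Second, the trichotomy must be exhaustive: whenever neither the clean claw ($S_k$), nor the triangle-closing pattern ($T_k$), nor a two-neighbour center ($\mathcal{M}_k$) is present, the surviving adjacency pattern must again permit a $\mu$-increasing reroute. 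The delicate point is to separate the $T_k$ case from the $\mathcal{M}_k$ case and to guarantee that every extracted subgraph is genuinely induced with all three arms of length exactly $k$; this is handled by keeping $P$ chordless, by using the shortest-path property of $R$ to certify that $Q$ is a clean pendant, and by scanning each arm for the first vertex that sees another arm and testing whether it sees one or two of them and whether any vertex sees two consecutive vertices of an arm.

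Finally, the running time follows by accounting: each round costs $\mathcal{O}(n+m)$ for the BFS and $\mathcal{O}(n+m)$ for the structural step, and $\mu(P)$ is a lexicographically increasing pair of integers in $\{0,\dots,n\}^2$, so there are $\mathcal{O}(n^2)$ rounds, giving the claimed $\mathcal{O}(n^2(n+m))$ bound. The integer $k$ enters only as the truncation length used when forming $Q$ and the arms $A,B$, so it never appears in the running time; this yields the robustness asserted in the discussion following \cref{cor:H-freeGeneral}, since the algorithm either outputs a path of eccentricity less than $k$ or an induced $S_k$, $T_k$, or $\mathcal{M}_k$ with no prior assumption on $G$.
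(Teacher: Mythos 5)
There is a genuine gap, and it sits exactly where you flag it yourself: you write that ``proving correctness of the structural step is the main obstacle,'' and then you do not overcome that obstacle --- you state the two things that ``must be established'' (soundness of the boundary reroutes and exhaustiveness of the trichotomy) and assert they are ``handled by'' keeping $P$ chordless and scanning for first cross-adjacencies. That exhaustiveness claim is precisely the hard content of the theorem, and it is not true for your setup: you anchor all three arms $A$, $B$, $Q$ at the single vertex $x$ where the BFS path attaches, but the obstructions of the theorem are not localized at one vertex. In the paper's proof, the three ``arms'' are pendant induced copies of $P_k$ attached at \emph{three} vertices $u$, $v$, $y$ --- the two endpoints of $P$ and an interior attachment point --- which may be pairwise far apart along $P$; the extraction of the obstruction is then delegated to \cref{lem:STM} (Dallard et al.), applied to the independent set $\{u,v,y\}$ and the connected graph $P-\{u,v\}$, which produces an induced member of $\mathcal{S}\cup\mathcal{T}\cup\mathcal{M}$ with extremities exactly $\{u,v,y\}$; gluing the three pendant $P_k$'s onto those extremities yields $S_k$, $T_k$, or a graph in $\mathcal{M}_k$. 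A local analysis of three length-$k$ arms around one vertex cannot reproduce this: for instance, the $\mathcal{M}$-type pattern (a center with two neighbors on a long path, both far from $x$) is invisible to it, and the neighbors of $r_{d-1}$ on $P$ other than $x$, which you mention but never treat, are exactly what breaks the single-anchor picture. (There is also an off-by-one: a triangle on the three vertices immediately following $x$, with the truncated arms, gives an induced $T_{k-1}$, not $T_k$.)

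A second, concrete error is the boundary case. When an arm of $P$ at $x$ has length less than $k$, you discard it and claim ``no vertex leaves'' $D(P)$ because the discarded arm ``remains dominated by $x$.'' The arm's own vertices do, but vertices that were within distance $k-1$ of the discarded arm can be at distance up to $2k-2$ from $x$ and from everything on the rerouted path, so $|D(P)|$ can strictly decrease and your potential $\mu$ need not increase; the iteration count argument then collapses. The paper avoids this by never discarding an arm: in \cref{cl:shorteningP} an endpoint $u$ is deleted only when $N_{k-1}[P-u]=N_{k-1}[P]$, and when deletion would lose a dominated vertex, that loss is converted into a certificate --- a pendant induced $P_k$ at $u$ with no other edges to $P$. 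Running this at both endpoints is what creates the two pendant paths $P_u,P_v$, which together with the BFS path $P_x$ either fail to induce a $3P_k$ (in which case \cref{cl:extendingNeigh} reroutes and strictly enlarges $N_{k-1}[P]$) or do induce one, triggering the \cref{lem:STM} extraction described above. So the repair is not cosmetic: both the progress measure and the obstruction extraction need the paper's two-claim structure (or an equivalent from-scratch argument replacing \cref{lem:STM}), neither of which your proposal supplies.
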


Let us notice that it is easy to see that in every case of the second outcome, $H$ contains as an induced subgraph both $3P_{k}$ and $P_{2k+1}+P_{k-1}$, giving the aforementioned algorithmic variant of \cref{cor:H-freeGeneral}. 
Moreover, it also holds that $H$ contains either $S_k$ minus a leaf or $T_k$ minus a leaf as an induced subgraph, bringing us closer to an algorithmic version of \cref{thm:hereditary-path-eccentricity}.

}
\lv{\paragraph{Organization of the paper.}

In \cref{sec:background}, we give an overview of known results on structural domination, including a key result of Bacs\'o and Tuza~\cite{Bacso2012} that we use in the proof of our main theorem (\Cref{thm:hereditary-path-eccentricity}).
We prove \Cref{thm:hereditary-path-eccentricity,cor:H-freeGeneral} in \cref{sec:main}.
We prove \Cref{th:C1P} in \cref{sec:C1P}.
\ev{We prove \Cref{thm:longest-path,thm:3P2} in \cref{sec:additional}.}
\cv{We prove \Cref{thm:longest-path} in \cref{sec:additional}.}
We conclude the paper with some final remarks and open questions in \cref{sec:conclusion}.
}

\sv{
\bigskip 

\noindent In \cref{sec:background}, we give an overview of known results on structural domination, including a key result of Bacs\'o and Tuza~\cite{Bacso2012} that we use in the proof of our main theorem (\Cref{thm:hereditary-path-eccentricity}), which we prove in \cref{sec:main}.
We postpone the proofs of our remaining theorems to the full version of this paper (see~\cite{ourArxiv}).}

\section{Background on Structural Domination}\label{sec:background} %

\Cref{thm:hereditary-path-eccentricity} is closely related to the framework of \textsl{structural domination}.
Given a class $\mathcal{D}$ of connected graphs, let us say that a graph $G$ is \emph{hereditarily dominated} by $\mathcal{D}$ if each connected induced subgraph of $G$ contains a dominating set that induces a graph from $\mathcal{D}$. 
Bacs\'o, Michalak, and Tuza (see~\cite{Bacso2005}) gave forbidden induced subgraph characterizations of graphs hereditarily dominated by classes of complete bipartite graphs, stars, connected bipartite graphs, and complete $k$-partite graphs.
Similar results were obtained by Michalak (see~\cite{Michalak2007}) for graphs hereditarily dominated by classes of cycles and paths, paths, trees, and trees with bounded diameter.
Camby and Schaudt (see~\cite{Camby2016}) proved that for every $k\ge 4$, a graph $G$ is $P_k$-free if and only if it is hereditarily dominated by the class consisting of all connected $P_{k-2}$-free graphs along with the $k$-vertex cycle.

The most general result along these lines was obtained independently by Bacs\'o and Tuza (see~\cite{Tuza2008,Bacso2009}).
To state the result, we need one more definition.
Given a connected graph $H$, the \emph{leaf graph} of $H$ is the graph $H^{\text{\textleaf}}$ obtained from $H$ by attaching a leaf to each vertex $v\in V(H)$ that is not a cut-vertex (that is, the graph $H-v$ is connected).
By definition, these new leaves are pairwise distinct and non-adjacent. 

\begin{theorem}[Bacs\'o~\cite{Bacso2009} and Tuza~\cite{Tuza2008}]\label{thm:B09T08}
Let $\mathcal{D}$ be a class of connected graphs that contains the class of all paths, excludes at least one connected graph, and is closed under taking connected induced subgraphs.
Let $\mathcal{F}$ be the family of all connected graphs $F$ that do not belong to $\mathcal{D}$, but all proper connected induced subgraphs of $F$ belong to $\mathcal{D}$.
Let $\mathcal{F}^{\text{\textleaf}}$ be the family of all leaf graphs of graphs in $\mathcal{F}$.
Then, a graph $G$ is hereditarily dominated by $\mathcal{D}$ if and only if $G$ is $\mathcal{F}^{\text{\textleaf}}$-free.
\end{theorem}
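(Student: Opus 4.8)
The plan is to prove both implications, exploiting the fact that the class of $\mathcal{F}^{\text{\textleaf}}$-free graphs is hereditary (closed under induced subgraphs). This lets me reformulate being \emph{hereditarily dominated by $\mathcal{D}$} as an ordinary (non-hereditary) statement: since $\mathcal{F}^{\text{\textleaf}}$-freeness is inherited by every connected induced subgraph, it suffices to prove that a connected graph $G$ is $\mathcal{F}^{\text{\textleaf}}$-free if and only if $G$ \emph{itself} admits a dominating set inducing a graph in $\mathcal{D}$ — equivalently, as $\mathcal{D}$ consists of connected graphs, a \emph{connected} dominating set $D$ with $G[D]\in\mathcal{D}$. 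Throughout I will use that every connected graph has a connected dominating set, hence an inclusion-minimal one, and that the hypothesis that $\mathcal{D}$ excludes a connected graph guarantees $\mathcal{F}\neq\emptyset$, so the statement is not vacuous.

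\textbf{Necessity.} I would prove the contrapositive: if $G$ contains an induced copy $H\cong F^{\text{\textleaf}}$ for some $F\in\mathcal{F}$, then $H$ is a connected induced subgraph admitting no connected dominating set $D$ with $H[D]\in\mathcal{D}$. Let $D$ be any dominating set with $H[D]$ connected (note $F$ is not a path, so $|V(F)|\ge 2$ and $|D|\ge 2$). For each non-cut-vertex $v$ of $F$, its private pendant leaf $\ell_v$ must be dominated, so $v\in D$ or $\ell_v\in D$; but if $\ell_v\in D$, then, $v$ being its only neighbor, connectivity of $H[D]$ forces $v\in D$ as well. Hence $D$ contains every non-cut-vertex of $F$. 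For a cut-vertex $u$ of $F$, each component of $F-u$ contains a non-cut-vertex of $F$ (e.g.\ one coming from a leaf block on that side), hence a vertex of $D$; since the attached leaves are pendant, $u$ still separates these components in $H$, so connectivity of $H[D]$ forces $u\in D$. Thus $V(F)\subseteq D$, whence $F=H[V(F)]$ is a connected induced subgraph of $H[D]$; as $\mathcal{D}$ is closed under connected induced subgraphs and $F\notin\mathcal{D}$, we conclude $H[D]\notin\mathcal{D}$.

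\textbf{Sufficiency.} Assume $G$ is connected and $\mathcal{F}^{\text{\textleaf}}$-free, and take a minimal connected dominating set $D$. If $G[D]\in\mathcal{D}$ we are done, so suppose not. Since $G[D]$ is connected and outside $\mathcal{D}$, I choose among its connected induced subgraphs not in $\mathcal{D}$ a minimal one $F$; by minimality $F\in\mathcal{F}$. The goal is then to realize $F^{\text{\textleaf}}$ as an induced subgraph of $G$, contradicting $\mathcal{F}^{\text{\textleaf}}$-freeness. Minimality of $D$ supplies, for each vertex $v\in D$ that is not a cut-vertex of $G[D]$, a \emph{private neighbor} $p_v\notin D$ whose only neighbor in $D$ is $v$ — for otherwise $D\setminus\{v\}$ would be a strictly smaller connected dominating set. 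Such a $p_v$ is automatically non-adjacent to $V(F)\setminus\{v\}\subseteq D$, so attaching the appropriate private neighbor to each non-cut-vertex of $F$ ought to produce the desired induced leaf graph.

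\textbf{Main obstacle.} The delicate point is making this last step rigorous, and I expect it to absorb the bulk of the work. Two difficulties must be controlled. First, a non-cut-vertex of $F$ need not be a non-cut-vertex of the larger graph $G[D]$, so the private neighbors granted by minimality of $D$ must be matched correctly to the non-cut-vertices of $F$; this calls for an extremal choice of $D$ (for instance minimizing $|D|$, and selecting $F$ so that the two notions of cut-vertex are synchronized). Second, and more seriously, the leaves of $F^{\text{\textleaf}}$ must be \emph{pairwise non-adjacent}, whereas two private neighbors $p_v,p_{v'}$ could a priori be adjacent; ruling this out (or re-selecting the private neighbors to avoid it) is the crux, since an edge $p_vp_{v'}$ yields a connection $v\,p_v\,p_{v'}\,v'$ bypassing $D$, which one must exploit to contradict the minimality of $D$. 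Handling these adjacencies, together with the bookkeeping certifying that each attached leaf meets $F$ only in its intended vertex, is exactly where the cut-vertex and connectivity argument from the necessity direction reappears, now to verify that the constructed induced subgraph is precisely a leaf graph from $\mathcal{F}^{\text{\textleaf}}$.
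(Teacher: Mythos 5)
The first thing to note is that the paper does not prove \Cref{thm:B09T08} at all: it imports it as a cited result of Bacs\'o~\cite{Bacso2009} and Tuza~\cite{Tuza2008} (and the paper's own results are actually derived from the distance-$k$ generalization, \Cref{thm:BT12distance}, also cited). So there is no in-paper proof to compare your argument to; it has to stand on its own as a proof of the quoted theorem. Your reduction of hereditary domination to a non-hereditary statement is sound, and your \emph{necessity} direction is essentially correct and complete: in an induced copy $H$ of $F^{\text{\textleaf}}$, any dominating set $D$ with $H[D]$ connected must contain every non-cut-vertex of $F$ (forced through the pendant leaves), and then every cut-vertex $u$ of $F$ as well, since each component of $F-u$ contains a non-cut-vertex of $F$ and $u$ still separates these in $H$; hence $V(F)\subseteq D$, so $F$ is a connected induced subgraph of $H[D]$ and $H[D]\notin\mathcal{D}$ by the closure hypothesis.

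The \emph{sufficiency} direction, however, contains a genuine gap — and your own ``Main obstacle'' paragraph concedes it: what you present there is a list of the difficulties, not a resolution of them. The two problems you name are exactly the substance of the theorem, not bookkeeping. (a) A non-cut-vertex of the minimal bad subgraph $F$ of $G[D]$ need not be a non-cut-vertex of $G[D]$, so inclusion-minimality of $D$ hands you private neighbors for the wrong vertices; no extremal choice of $D$ (minimum cardinality or otherwise) is known to synchronize the two cut-vertex structures, and you do not exhibit one. (b) Even when private neighbors exist, an edge $p_vp_{v'}$ between two of them does \emph{not} contradict minimality of $D$: the path $v\,p_v\,p_{v'}\,v'$ runs outside $D$, so it says nothing about $D\setminus\{v\}$ being dominating, and the natural swap $(D\setminus\{v\})\cup\{p_v\}$ fails to be connected precisely because $v$ is the only neighbor of $p_v$ in $D$. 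So the contradiction you hope to ``exploit'' is not there. These are the reasons this statement was a long-standing conjecture before Bacs\'o and Tuza proved it independently, each with a paper-length argument built on induction and carefully constructed dominating sets rather than on a minimal connected dominating set with private neighbors. As it stands, your proposal proves the easy half and only restates the hard half as an open task; to be a proof it would need that half worked out in full (essentially reproving Bacs\'o/Tuza), or the result must simply be cited, as the paper does.
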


The main result of~\cite{Tuza2008,Bacso2009} is even more general, without the restriction that $\mathcal{D}$ contains the class of all paths; but since we do not need this result, we omit the precise statement.
A generalization of \Cref{thm:B09T08} to domination at distance was proved by Bacs\'o and Tuza (see~\cite{Bacso2012}).
A \emph{distance-$k$ dominating set} in a graph $G$ is a set $D\subseteq V(G)$ such that every vertex in $G$ is at distance at most $k$ from some vertex in $D$.
If, in addition, the graph $G[D]$ is connected, then $D$ is said to be a \emph{connected distance-$k$ dominating set}.
A graph $G$ is said to be \emph{hereditarily $k$-dominated} by a class $\mathcal{D}$ of connected graphs if each connected induced subgraph of $G$ admits a distance-$k$ dominating set that induces a graph from $\mathcal{D}$. 
Given a connected graph $H$, the \emph{$k$-leaf graph} of $H$ is the graph $H_k^{\text{\textleaf}}$ obtained from $H$ by attaching a pendant path of length $k$ to each vertex $v\in V(H)$ that is not a cut-vertex.
Bacs\'o and Tuza (see~\cite{Bacso2012}) generalized \Cref{thm:B09T08} as follows.

\begin{theorem}[Bacs\'o and Tuza~\cite{Bacso2012}]\label{thm:BT12distance}
Let $\mathcal{D}$ be a class of connected graphs that contains the class of all paths, excludes at least one connected graph, and is closed under taking connected induced subgraphs.
Let $\mathcal{F}$ be the family of all connected graphs $F$ that do not belong to $\mathcal{D}$, but all proper connected induced subgraphs of $F$ belong to $\mathcal{D}$.
Let $k$ be a positive integer and let $\mathcal{F}_k^{\text{\textleaf}}$ be the family of all $k$-leaf graphs of graphs in $\mathcal{F}$.
Then, a graph $G$ is hereditarily $k$-dominated by $\mathcal{D}$ if and only if $G$ is $\mathcal{F}_k^{\text{\textleaf}}$-free.
\end{theorem}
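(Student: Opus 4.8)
The plan is to prove both implications directly, treating the statement as the distance-$k$ analogue of \Cref{thm:B09T08} (the case $k=1$). Throughout I write $N_{\le k}[D]$ for the set of vertices at distance at most $k$ from a set $D$, and I note at the outset that, since every member of $\mathcal{D}$ is connected, any distance-$k$ dominating set inducing a graph in $\mathcal{D}$ is automatically connected; I will use this repeatedly.

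For the necessity direction I would argue the contrapositive, showing that if $G$ contains an induced copy of $H_k^{\text{\textleaf}}$ for some $H\in\mathcal{F}$, then $G$ is not hereditarily $k$-dominated by $\mathcal{D}$. As $H_k^{\text{\textleaf}}$ is itself a connected induced subgraph of $G$, it suffices to show that $H_k^{\text{\textleaf}}$ has no distance-$k$ dominating set inducing a graph from $\mathcal{D}$. So let $D$ be such a set; it is connected. For a non-cut-vertex $v$ of $H$, the only vertices within distance $k$ of the far endpoint of the pendant path $P_v$ lie on $P_v$; hence $D$ meets $P_v$, and since $P_v$ attaches to the rest of $H_k^{\text{\textleaf}}$ only at $v$, connectivity of $D$ forces $v\in D$. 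Thus $D$ contains every non-cut-vertex of $H$. For a cut-vertex $c$, the components of $H-c$ each contain a non-cut-vertex (hence a vertex of $D$), and these stay separated in $H_k^{\text{\textleaf}}-c$, so connectivity of $D$ forces $c\in D$. Therefore $V(H)\subseteq D$, so $H$ is a connected induced subgraph of $H_k^{\text{\textleaf}}[D]$; since $H\notin\mathcal{D}$ and $\mathcal{D}$ is closed under connected induced subgraphs, $H_k^{\text{\textleaf}}[D]\notin\mathcal{D}$, a contradiction.

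For sufficiency, since $\mathcal{F}_k^{\text{\textleaf}}$-freeness passes to connected induced subgraphs, it is enough to prove that every connected $\mathcal{F}_k^{\text{\textleaf}}$-free graph $G$ admits a distance-$k$ dominating set inducing a graph in $\mathcal{D}$. I would use an extremal argument. Among all connected sets $D$ with $G[D]\in\mathcal{D}$ (nonempty, as singletons induce $P_1\in\mathcal{D}$), choose $D^*$ with $N_{\le k}[D^*]$ as large as possible and, subject to that, inclusion-minimal. Assume $D^*$ fails to distance-$k$ dominate $G$. A shortest path from $D^*$ to an undominated vertex gives an induced path $v_0v_1\cdots v_{k+1}$ with $v_0\in D^*$ and each $v_i$ at distance exactly $i$ from $D^*$. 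Maximality forces $G[D^*\cup\{v_1\}]\notin\mathcal{D}$ (otherwise it would dominate $v_{k+1}$ as well), so this connected graph contains a minimal forbidden induced subgraph $F\in\mathcal{F}$, which must use $v_1$ (else $F\subseteq G[D^*]\in\mathcal{D}$). The tail $v_1v_2\cdots v_{k+1}$ is then an induced pendant path of length $k$ attached to $F$ only at $v_1$.

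The main obstacle is upgrading this single pendant path to a full induced copy of $F_k^{\text{\textleaf}}$: one must produce, simultaneously and at \emph{every} non-cut-vertex of $F$, pairwise non-adjacent induced pendant paths of length exactly $k$ meeting $F$ only at their roots. This is the delicate heart of the Bacs\'o--Tuza method. I would extract these paths from the inclusion-minimality of $D^*$: each non-cut-vertex $w$ of $F$ that is removable from $D^*$ is \emph{essential} for the dominated region, so deleting it uncovers a private vertex at distance at most $k$ from $w$ but farther from $D^*\setminus\{w\}$, and a shortest witness path serves as the pendant. The subtle points I expect to fight are ensuring the private witnesses sit at distance \emph{precisely} $k$ (rather than less), that the resulting paths avoid one another and the rest of $F$, and that cut-vertices of $F$—which receive no pendant path—are handled consistently with the cut-vertices of $G[D^*]$. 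Obtaining the induced $F_k^{\text{\textleaf}}\in\mathcal{F}_k^{\text{\textleaf}}$ then contradicts $\mathcal{F}_k^{\text{\textleaf}}$-freeness and finishes the proof. An alternative I would consider is a black-box reduction to \Cref{thm:B09T08} via a gadget construction, but the mismatch between single leaves and length-$k$ pendant paths makes a clean reduction unlikely, so I expect the direct extremal argument to be the workable route.
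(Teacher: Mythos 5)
First, a point of comparison you could not have known: the paper does not prove this statement at all. It is imported verbatim as a theorem of Bacs\'o and Tuza~\cite{Bacso2012} and used as a black box in the proof of \Cref{thm:hereditary-path-eccentricity}, so there is no in-paper proof to measure your attempt against; your argument has to stand on its own. Its first half does: the necessity direction (any distance-$k$ dominating set $D$ of $H_k^{\text{\textleaf}}$ inducing a connected member of $\mathcal{D}$ must meet every pendant path, hence --- since $H$, not being a path, has at least two non-cut-vertices --- contain every non-cut-vertex of $H$, hence by connectivity every cut-vertex as well, so that $H$ is a connected induced subgraph of $H_k^{\text{\textleaf}}[D]$) is correct and complete.

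The sufficiency direction, however, has a genuine gap, and you flag it yourself: after the extremal choice of $D^*$ and the extraction of $F\in\mathcal{F}$ inside $G[D^*\cup\{v_1\}]$ with a single pendant path at $v_1$, everything that makes the theorem a theorem is still missing, namely an induced copy of $F_k^{\text{\textleaf}}$, i.e., pairwise non-adjacent induced pendant paths of length $k$ rooted at \emph{all} non-cut-vertices of $F$. The repair you sketch does not close this. One of your three worries does evaporate: if $w\in D^*$, $|D^*|\ge 2$, and $G[D^*\setminus\{w\}]$ is connected, then any vertex $x$ that is $k$-dominated by $D^*$ but not by $D^*\setminus\{w\}$ automatically satisfies $\mathrm{dist}(x,w)=k$, since a neighbor $w'$ of $w$ in $D^*$ gives $k<\mathrm{dist}(x,w')\le\mathrm{dist}(x,w)+1\le k+1$; moreover the shortest $w$--$x$ path is induced and non-adjacent to $D^*\setminus\{w\}$. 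But the other two worries are exactly where the theorem is hard. (a) Minimality of $D^*$ speaks only about vertices of $D^*$ whose deletion leaves $G[D^*]$ connected, whereas you need pendant paths at the non-cut-vertices of $F$, and these two sets of vertices can be disjoint: for instance, $G[D^*]$ can be a long path (paths always lie in $\mathcal{D}$) with $V(F)\cap D^*$ consisting of interior vertices of that path, so that every such vertex is a cut-vertex of $G[D^*]$ and minimality yields no private vertex for any of them. (b) Even when witness paths exist, nothing in your setup makes them pairwise disjoint, pairwise non-adjacent, or non-adjacent to $v_1$ and to the tail $v_1\cdots v_{k+1}$, all of which are needed for the copy of $F_k^{\text{\textleaf}}$ to be induced; note also that $v_1\notin D^*$, so the minimality argument cannot even be invoked at $v_1$. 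Overcoming (a) and (b) --- by a far more careful choice of the dominating set and of the minimal forbidden subgraph --- is the actual content of Bacs\'o and Tuza's work (already the case $k=1$, \Cref{thm:B09T08}, is a substantial theorem). As written, your text is a correct proof of the easy direction plus a plan for the hard one, not a proof of the statement.
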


Note that \Cref{thm:B09T08} is a special case of \Cref{thm:BT12distance} for the case $k = 1$.

\section{Proof of General Theorem (\Cref{thm:hereditary-path-eccentricity})}\label{sec:main}

\lv{
For convenience, we restate our main theorem.

\mainThm*
}

\sv{\begin{proof}[Proof of \Cref{thm:hereditary-path-eccentricity}]}
\lv{\begin{proof}}
Fix an integer $k \ge 1$.
Let $G$ be a graph such that every connected induced subgraph $H$ of $G$ satisfies $\pe(H)< k$.
To show that $G$ is $\{S_{k},T_{k}\}$-free, it suffices to show that $\pe(S_k)\ge k$ and $\pe(T_k)\ge k$.
Let $H$ be a graph isomorphic to either $S_{k}$ or $T_{k}$ and let $S$ be any path in $H$.
We claim that $S$ has eccentricity at least $k$.
Suppose not.
Let $C$ be the set of vertices of degree $3$ in $H$.
Then, the graph $H-V(C)$ consists of three disjoint paths $P$, $Q$, and $R$, each with $k$ vertices.
Moreover, each of the paths $P$, $Q$, and $R$ contains a vertex at distance $k$ from $V(C)$.
Since the eccentricity of $S$ is less than $k$,
each of the paths $P$, $Q$, and $R$ contains a vertex of $S$.
Since $S$ has at most two endpoints, at most two of the paths $P$, $Q$, and $R$ contain an endpoint of $P$; hence, we may assume without loss of generality that $P$ contains an internal vertex of $S$ but no endpoint of $S$.
However, this implies that two distinct edges of $S$ connect $P$ with the rest of $H$, a contradiction.
This shows that $\pe(H)\ge k$, as claimed.

It remains to prove that if $G$ is $\{S_{k},T_{k}\}$-free, then every connected induced subgraph $H$ of $G$ has path eccentricity less than $k$, or, equivalently, that every connected $\{S_{k},T_{k}\}$-free graph has path eccentricity less than $k$.
For $k = 1$, the statement coincides with that of \Cref{thm:clawnetfree}.
Suppose now that $k\ge 2$.
Let $\mathcal{F} = \{S_1,T_1\}$ and let $\mathcal{D}$ be the class of all connected $\mathcal{F}$-free graphs.
Note that $\mathcal{F}_{k-1}^{\text{\textleaf}} = \{S_{k},T_{k}\}$.
Consequently, since $\mathcal{D}$ satisfies the hypothesis of \Cref{thm:BT12distance}, it follows that every $\{S_{k},T_{k}\}$-free graph $G$ is hereditarily $(k-1)$-dominated by $\mathcal{D}$.
Let $H$ be a connected induced subgraph of $G$.
Then, since $G$ is hereditarily $(k-1)$-dominated by $\mathcal{D}$, there exists a distance-$(k-1)$ dominating set $D$ in $H$ such that the induced subgraph $H[D]$ belongs to $\mathcal{D}$.
By \Cref{thm:clawnetfree}, the graph $H[D]$ admits a Hamiltonian path $P$.
It follows that $P$ is a path in $H$ with eccentricity less than $k$, implying that $\pe(H)< k$.
\end{proof}

\lv{
Our proof of \cref{cor:H-freeGeneral} will be based on the following.

\begin{lemma}\label{lem: IS both SkTk}
Let $k\geq 1$ be an integer and $H$ be a graph.
Then, $H$ is an induced subgraph of both $S_{k}$ and $T_{k}$ if and only if $H$ is an induced subgraph of $3P_{k}$ or $P_{2k+1}+P_{k-1}$.
\end{lemma}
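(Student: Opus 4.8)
The plan is to reduce both implications to a handful of explicit vertex deletions. First I would fix notation: write $S_k$ as a center $c$ with three branches, the $i$-th branch being a path $c\,v^i_1 v^i_2\cdots v^i_k$ of length $k$; and write $T_k$ as a triangle $t_1t_2t_3$ with a pendant path $t_i\,w^i_1\cdots w^i_k$ of length $k$ attached at each $t_i$. Two structural identities will carry most of the argument. Deleting the center gives $S_k-c=3P_k$, and deleting a single branch-start, say $v^3_1$, gives $S_k-v^3_1=P_{2k+1}+P_{k-1}$: the center together with the two intact branches forms the induced path $v^1_k\cdots v^1_1\,c\,v^2_1\cdots v^2_k$ on $2k+1$ vertices, while the truncated third branch $v^3_2\cdots v^3_k$ becomes a disjoint $P_{k-1}$. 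On the $T_k$ side, deleting the whole triangle gives $T_k-\{t_1,t_2,t_3\}=3P_k$.

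For the ``if'' direction I would only verify that each of $3P_k$ and $P_{2k+1}+P_{k-1}$ is an induced subgraph of both $S_k$ and $T_k$; the claim then follows by transitivity of the induced-subgraph relation. Three of the four containments are exactly the deletions above ($3P_k$ in $S_k$ and in $T_k$, and $P_{2k+1}+P_{k-1}$ in $S_k$). For $P_{2k+1}+P_{k-1}$ inside $T_k$ I would delete $\{t_3,w^1_k,w^3_k\}$: the component $w^1_{k-1}\cdots w^1_1\,t_1\,t_2\,w^2_1\cdots w^2_k$ is an induced $P_{2k+1}$, and $w^3_1\cdots w^3_{k-1}$ is a disjoint $P_{k-1}$, so $T_k-\{t_3,w^1_k,w^3_k\}=P_{2k+1}+P_{k-1}$.

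For the ``only if'' direction, suppose $H$ is an induced subgraph of both $S_k$ and $T_k$. The extra ingredient is that $T_k$ is claw-free: its only vertices of degree at least three are $t_1,t_2,t_3$, and each has two mutually adjacent neighbors (the other two triangle vertices), so no vertex is the center of an induced $K_{1,3}$; hence $H$ is claw-free. Now invoke $H\subseteq S_k$. If $c\notin V(H)$, then $H$ sits inside $S_k-c=3P_k$ and we are done. Otherwise $c\in V(H)$; since $c$ is the unique vertex of degree at least three in $S_k$, it is the only possible center of an induced claw in $H$, and its three neighbors $v^1_1,v^2_1,v^3_1$ are pairwise non-adjacent. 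Were all three in $H$, they would form an induced claw together with $c$, contradicting claw-freeness; so some $v^i_1\notin V(H)$, whence $H$ sits inside $S_k-v^i_1=P_{2k+1}+P_{k-1}$. This exhausts all cases.

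The main point requiring care is the checking of the two deletion identities $S_k-v^i_1=P_{2k+1}+P_{k-1}$ and $T_k-\{t_3,w^1_k,w^3_k\}=P_{2k+1}+P_{k-1}$ as \emph{induced} equalities — getting the vertex counts right and confirming no stray edges survive (the degenerate case $k=1$, where $P_{k-1}=P_0$ is empty, is worth a sanity check). The only conceptual step is recognizing that claw-freeness is exactly the property that rules out the degree-three center; and this is forced, since both target graphs $3P_k$ and $P_{2k+1}+P_{k-1}$ have maximum degree two and therefore cannot contain the claw that a degree-three center would produce. This also makes transparent why \emph{both} $S_k$ and $T_k$ are needed: $S_k$ pins down the underlying tree structure, while $T_k$ supplies the claw-freeness that removes the one otherwise-troublesome case.
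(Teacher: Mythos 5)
Your proof is correct and follows essentially the same route as the paper: both directions rest on the deletion identities $S_k - c \cong 3P_k$ and $S_k - v^i_1 \cong P_{2k+1}+P_{k-1}$, combined with a property inherited from $T_k$ that prevents the center $c$ from surviving in $H$ together with all three of its neighbors. The only cosmetic difference is that you extract claw-freeness from $T_k$, whereas the paper observes that a degree-$3$ vertex of $H$ would force a triangle (via the induced embedding in $T_k$) inside the acyclic graph $S_k$; the two arguments are interchangeable.
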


\begin{proof}
Suppose first that $H$ is an induced subgraph of $3P_{k}$ or $P_{2k+1}+P_{k-1}$. 
Since $3P_{k}$ and $P_{2k+1}+P_{k-1}$ are induced subgraphs of both $S_{k}$ and $T_{k}$, it immediately follows that  $H$ is an induced subgraph of both $S_{k}$ and $T_{k}$.

Suppose now that $H$ is an induced subgraph of both $S_{k}$ and $T_{k}$.
If $H$ admits a vertex $u$ of degree $3$, then, since $H$ is an induced subgraph of $T_{k}$, two neighbors of $u$ are adjacent, forming a cycle.
However, this is impossible, since $H$ is also a subgraph of the acyclic graph $S_{k}$.
Thus $H$ has maximum degree at most $2$.
This implies that $H$ is an induced subgraph of either the graph obtained by removing the vertex of degree $3$ from $S_{k}$, which is isomorphic to $3P_k$, or the graph obtained by removing a neighbor of the vertex of degree $3$ from $S_{k}$, which is isomorphic to $P_{2k+1}+P_{k-1}$.
\end{proof}

\mainCor*
\begin{proof} 
Suppose first that $H$ is an induced subgraph of $3P_{k}$ or $P_{2k+1}+P_{k-1}$.
By \cref{lem: IS both SkTk}, $H$ is an induced subgraph of both $S_{k}$ and $T_{k}$.
This implies that every connected $H$-free graph $G$ is $\{S_{k},T_{k}\}$-free and, hence, satisfies $\pe(G)< k$ thanks to \Cref{thm:hereditary-path-eccentricity}.

Suppose now that $H$ is not an induced subgraph of $3P_{k}$ or $P_{2k+1}+P_{k-1}$.
Then, again by \cref{lem: IS both SkTk}, $H$ is not an induced subgraph of either $S_{k}$ or $T_{k}$. 
Let $R_{k}$ be a graph in $\{S_{k},T_{k}\}$ that does not admit $H$ as an induced subgraph. 
Then $R_{k}$ is a connected $H$-free graph and ${\pe(R_{k})\geq k}$ (by \Cref{thm:hereditary-path-eccentricity}).
\end{proof}

\section{Implications for the *-C1P Property (\cref{th:C1P})}\label{sec:C1P}

We start with the following lemma, which is an unpublished result of Bastide, Robinson, and the second author, obtained during the writing of \cite{BHR25}. 
We include the proof here for completeness.

\begin{lemma}\label{claim:C1P}
The graphs $S_2$ and $T_1$ do not have the *-C1P.
\end{lemma}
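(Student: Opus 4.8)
The plan is to argue by contradiction in both cases: assume a linear ordering $<$ of the vertices together with a choice, for each vertex $v$, of either $N(v)$ or $N[v]$, such that every chosen set is an interval with respect to $<$. The first useful reduction is that every degree-one vertex is harmless: its open neighborhood is a single vertex, which is automatically consecutive, so the leaves impose no constraint and all the work is done by the vertices of higher degree. I would record the basic principle used repeatedly: if a chosen set $I$ is an interval and $I$ contains two vertices $u,w$, then $I$ contains every vertex lying strictly between $u$ and $w$ in $<$.

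For $S_2$, write $c$ for the center, $a_1,a_2,a_3$ for the subdivision vertices, and $b_1,b_2,b_3$ for the leaves, with edges $ca_i$ and $a_ib_i$. The key observation is that for each arm, whichever neighborhood of $a_i$ is chosen---$N(a_i)=\{c,b_i\}$ or $N[a_i]=\{a_i,c,b_i\}$---the only vertex that may lie strictly between $c$ and $b_i$ in $<$ is $a_i$ itself. Consequently each leaf $b_i$ sits on one of the two sides of $c$ with at most $a_i$ separating them. Since there are three arms but $c$ has only two sides, two leaves, say $b_i$ and $b_j$, lie on the same side of $c$; the one farther from $c$ then has a forbidden vertex (the nearer leaf) strictly between it and $c$, contradicting the observation. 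This is a short pigeonhole argument, and I expect it to be routine once the between-principle is in place.

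For $T_1$ (the net), write $x_1,x_2,x_3$ for the triangle and $y_1,y_2,y_3$ for the pendant leaves with $x_iy_i\in E$. After relabeling I may assume $x_1<x_2<x_3$. First I would show that $x_2$ is forced to use its closed neighborhood: the open choice $N(x_2)=\{x_1,x_3,y_2\}$ cannot be an interval, since $x_2$ lies strictly between $x_1$ and $x_3$ yet $x_2\notin N(x_2)$. Hence $\{x_1,x_2,x_3,y_2\}$ is a block of four consecutive positions. Next, for each $i$ the chosen set for $x_i$ contains both other triangle vertices, so it contains every position between them; since this chosen set is contained in $\{x_i\}\cup N(x_i)$ and therefore avoids $y_2$ for $i\in\{1,3\}$, the leaf $y_2$ cannot lie strictly between two triangle vertices inside the block and must sit at one of its two ends. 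Finally, by the symmetry reversing $<$ I may assume the block reads $x_1,x_2,x_3,y_2$, and a direct positional analysis of the intervals chosen for $x_1$ and $x_3$ pins $y_1$ immediately left of the block and then leaves no admissible position for $y_3$, a contradiction.

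The main obstacle I anticipate is the $T_1$ case: unlike $S_2$, where a single between-observation plus pigeonhole suffices, here the forced four-vertex block and the placement of the interior leaf $y_2$ interact with the intervals of the two extreme triangle vertices, and the contradiction only emerges after tracking the exact positions of $y_1$ and $y_3$ relative to that block. Exploiting the full symmetric-group symmetry of the net together with the order-reversal symmetry of $<$ is what keeps this bookkeeping short.
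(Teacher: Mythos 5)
Your proof is correct, but it follows a genuinely different route from the paper's. The paper gives a single unified argument for both graphs: writing $a,b,c$ for the three vertices adjacent to the leaves $a',b',c'$, it observes that the chosen sets $N_a,N_b,N_c$ are pairwise intersecting (in $S_2$ they all contain the center; in $T_1$ the triangle makes them pairwise meet), and that three pairwise intersecting intervals on a line always have one contained in the union of the other two; this is impossible here because each set has a private element ($a'\in N_a\setminus(N[b]\cup N[c])$, and symmetrically), yielding an immediate contradiction for both graphs at once. You instead treat the two graphs separately by direct positional reasoning: for $S_2$, the betweenness principle applied to the subdivision vertices plus a pigeonhole on the two sides of the center; for $T_1$, forcing the middle triangle vertex to use its closed neighborhood, deducing a consecutive block $x_1,x_2,x_3,y_2$, and then eliminating positions for $y_1$ and $y_3$. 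I checked your sketched endgame for $T_1$ and it does close: $N_{x_1}$ cannot be the open neighborhood (the interval would be exactly $\{x_2,x_3\}$, excluding $y_1$), so it is closed and pins $y_1$ immediately left of the block, after which both choices for $N_{x_3}$ are intervals trapped inside the block with no room for $y_3$. The trade-off: the paper's argument is shorter, symmetric in the three arms, and isolates the real obstruction (three pairwise intersecting intervals each with a private element cannot exist), which is why it transfers between $S_2$ and $T_1$ with no extra work; your argument is more elementary and self-contained --- it never invokes the interval-union fact --- but pays for that with two separate case analyses and noticeably more bookkeeping in the net case.
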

\begin{proof} 
    Since the argument is the same for both graphs, we use the same notations for their vertices.
    Let $a'$, $b'$, and $c'$ be the three leaves in $S_2$ (respectively $T_1$), $a$, $b$, and $c$ their respective neighbors and let $x$ the central vertex of $S_2$ (see \cref{fig:s2t1}).
    Suppose that $S_2$ (respectively $T_1$) has the *-C1P. 
    Then there exists an ordering $\sigma$ on the vertices such that for each vertex $u$, there exists $N_u\in \{N[u] ,N(u)\}$ (in particular, $N(u)\subseteq N_u \subseteq N[u]$), such that $N_u$ consists of consecutive vertices in the ordering $\sigma$. 
    Observe that in both graphs, the sets $N_a$, $N_b$, and $N_c$ are pairwise intersecting. 
    Since these sets correspond to consecutive sets in the ordering $\sigma$, one of those sets, w.l.o.g., $N_a$, is contained in the union of the others, here $N_b$ and $N_c$.
    However, $a'$ belongs to $N(a)\subseteq N_a$ and does not belong to $N[b]\cup N[c]$, and, hence, also does not belong to $N_b\cup N_c$, a contradiction.
\end{proof}
\begin{figure}
    \centering
    \includegraphics[width=0.7\linewidth]{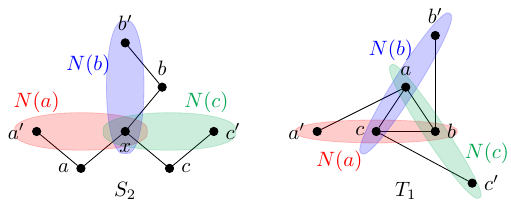}
    \caption{Notation for the vertices of $S_2$ and $T_1$ in \Cref{claim:C1P}, with the open neighborhoods of $a$, $b$, and~$c$.}
    \label{fig:s2t1}
\end{figure}

We restate our main theorem of this section, confirming a conjecture from~\cite{BHR25}.
\CoPthm*
\begin{proof}
It follows directly from the definition of \hbox{*-C1P} that if $G$ is a graph that has the *-C1P and $H$ is an induced subgraph of $G$, then $H$ also has the *-C1P.
Thus, by~\cref{claim:C1P}, graphs with *-C1P are $\{S_2,T_1\}$-free.
Therefore, they are also $\{S_2,T_2\}$-free and, hence, have path eccentricity at most 1 by \cref{thm:hereditary-path-eccentricity}.
\end{proof}

\ev{\section{Algorithmic Proofs}\label{sec:additional}}
\cv{\section{Algorithmic Proof (\cref{thm:longest-path})}\label{sec:additional}}

\ev{
We next prove our main result regarding $3P_2$-free graphs.
While the statement of the theorem is not algorithmic, the proof is, in the sense that it yields an efficient algorithm transforming a given longest path in a $3P_2$-free graph into a longest path that is dominating.
(Recall, however, that identifying a longest path in a given $3P_2$-free graph is an \textsf{NP}-hard problem.)
}

In this section, we use the following notation regarding paths in graphs.
Given a path $P$ with extremities $u,v$ and vertices $x,y$ adjacent respectively to $u,v$, we denote by $xPy$ the path obtained by adding the edges $xu$, $vy$ to $P$. 
Similarly, we denote by $xP$ and $Py$ the paths obtained by adding respectively only $xu$ or only $vy$ to $P$.
\ev{%
For any integer $k\ge 0$ and path $P$ in a graph $G$, we denote by $N_k[P]$ the set of vertices at distance at most $k$ from some vertex of $P$. 
For $k = 1$, we write $N[P]$ instead of $N_1[P]$.
We say that a path $P$ \emph{$k$-dominates} a vertex $x\in V(G)$ if $x\in N_k[P]$.
For $k = 1$, we simply say that $P$ \emph{dominates}~$x$.}
\cv{We say that a path $P$ \emph{dominates} a vertex $x\in V(G)$ if $x\in N[P]$, that is, if $x$ is in $P$ or in the neighborhood of $P$.
}

\longestThm*

\begin{proof}
  Let $P$ be a longest path in $G$.
  Suppose that $P$ is not dominating.
  In such a case, we show how to modify $P$ to obtain a longest path $P'$ that dominates more vertices than $P$.
  Let $u,v$ be the extremities of~$P$.

Since $P$ does not dominate $G$ and $G$ is connected, there exists a vertex $x$ at distance $2$ from $P$. 
Let $y$ be a neighbor of $x$ adjacent to some vertex of $P$, and $w$ be a neighbor of $y$ in $P$ that is closest to $u$ along $P$.
Observe that $y$ and $w$ have to exist, as $x$ is at distance $2$ from $P$.

Observe that neither $u$ nor $v$ has a neighbor outside of $P$, as $P$ is a longest path.
Hence, $w$ is an internal vertex of $P$, implying in particular that $u$, $v$, and $w$ are three distinct vertices of $P$.
Denote by $u'$, $v'$ the neighbor of $u$, respectively $v$ on $P$.
Observe that $u'\neq v$ and $v'\neq u$.
Let $P^*$ be the subgraph of $P$ obtained by deleting $u$ and $v$ from $P$.
In the case $y$ is adjacent to any of $u',v'$, we define $P'$ as $xyP^*v$ or $uP^*yx$. 
In both cases, $P'$ has a greater length than $P$, a contradiction.
Therefore, $x$ and $y$ are non-adjacent to any of $u$, $v$, $u'$, and $v'$. 
Moreover, as $y$ is adjacent to $P$, the vertices $u'$, $w$, and $v'$ are pairwise distinct, implying that $|V(P)|\geq 5$.

Let $P^1$ and $P^2$ be the two paths resulting from deleting $u$, $v$, and $w$ from $P^*$, containing respectively $u'$ and $v'$. 
Suppose that $u'v\in E(G)$, or $uv'\in E(G)$, or $uv\in E(G)$.
Then, the path $Q$ defined respectively by $xywP^1vP^2$, or $xywP^1uP^2$, or $xywP^1uvP^2$ is longer than $P$, a contradiction.
Therefore, $\{u'v,uv',uv\}\cap E(G) = \emptyset$.
Suppose that $u'$ is non-adjacent to $v'$. 
Then, the set $\{u,v,u',v'\}$ induces a $2P_2$ in $G$ and consequently $\{x,y,u,v,u',v'\}$ induces a $3P_2$, a contradiction.
Therefore, $u'v'\in E(G)$.  
Let $P'$ be the path $xywP^1P^2$. 
Note that the path $wP^1P^2$ has vertex set $V(P)\setminus\{u,v\}$, and, since neither $u$ nor $v$ has a neighbor outside of $P$, the path $wP^1P^2$ dominates as many vertices as $P$.
Hence, $P'$ has the same length as $P$, but it dominates more vertices.

The above argument shows that we can modify a given longest path $P$ at most $|V(G)|$ times until we obtain a longest path that is dominating.\cv{

It remains to discuss the running time in the case where we start the process above with some arbitrary (not necessarily the longest) path.
Then, in each step of the argument above, we modify the current path $P$ to a path $P'$ that is either longer than $P$ or has the same length but dominates more vertices.
Therefore, we run the standard BFS algorithm, checking whether path $P$ is already dominating.
If not, it finds a vertex $x$ at distance $2$ from $P$ together with a connecting path back to $P$, and we proceed with the algorithm finding $P'$ and repeat.
BFS runs in time $\Oh(n+m)$.
Our proof guarantees at most $n^2$ steps, as in each step we either increase the size of the dominated set by at least $1$, or we do not decrease it, but we increase the length of path $P$.}
\end{proof}

\ev{
Before we proceed with the proof of \Cref{thm:3P2}, we will need the following lemma from \cite{DDHMPT24} regarding a sufficient condition for the presence of an induced subgraph isomorphic to a graph from $\mathcal{S}$, $\mathcal{T}$, or $\mathcal{M}$ (recall the definitions on p.~\pageref{fig:S-T-M} and~\cref{fig:S-T-M}). 

\begin{lemma}[Dallard et al.~{\cite[Lemma 1]{DDHMPT24}}]\label{lem:STM}
  Let $G$ be a graph and $I$ be an independent set in $G$ with $|I| = 3$.
  If there exists a component $C$ of $G \setminus I$ such that $I \subseteq N(C)$, then there exists an induced subgraph $H$ of $G[N[C]]$ such that $H \in \mathcal{S} \cup \mathcal{T} \cup \mathcal{M}$ and $I$ is exactly the set of extremities of $H$.
\end{lemma}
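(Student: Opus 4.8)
The plan is to work inside $G[N[C]]$, fix neighbors $a',b',c'\in C$ of the three vertices $a,b,c$ of $I$, and exploit two features repeatedly: that $I$ is independent (so $a,b,c$ are pairwise nonadjacent), and that a shortest path in any induced subgraph is chordless, hence induced. The three target families are distinguished by how the ``third'' vertex attaches to an induced path joining the other two: $\mathcal{M}$ occurs when one of the three vertices has at least two neighbors on such a path (it becomes the \emph{center}); $\mathcal{S}$ occurs when the attachment is through a single branch vertex; and $\mathcal{T}$ occurs when the attachment creates a triangle. In each case the three vertices of $I$ are arranged to be exactly the three extremities.

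Concretely, I would first build an induced path $Q$ from $a$ to $b$ whose interior lies in $C$, by taking a shortest $a$--$b$ path in $G[\{a,b\}\cup C]$ (this exists since $C$ is connected and meets both $N(a)$ and $N(b)$, and being shortest it is induced, with interior in $C$). I then attach $c$ by a case analysis on $N(c)\cap V(Q)$. If $c$ has at least two neighbors on $Q$, then, since $c\not\sim a,b$, the graph $G[V(Q)\cup\{c\}]$ is exactly a member of $\mathcal{M}$ with path $Q$ and center $c$; its extremities are $a,b$ (the endpoints of $Q$) and $c$. If $c$ has exactly one neighbor on $Q$, necessarily an internal vertex $q$, then $G[V(Q)\cup\{c\}]$ is a subdivision of $K_{1,3}$ with branch vertex $q$ and leaves $a,b,c$, so it lies in $\mathcal{S}$.

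The remaining case is when $c$ has no neighbor on $Q$. Here I would take a shortest path $R$ from $c$ to $V(Q)$ inside $G[\{c\}\cup C]$; it lands on an internal vertex of $Q$, its interior forms a clean pendant stem (only its last internal vertex $s$ can touch $Q$, by the shortest-path property), and $c$ is a degree-one endpoint of the stem. Writing $v_p$ and $v_r$ for the leftmost and rightmost neighbors of $s$ along $Q$, I distinguish three situations: if $v_p=v_r$, the stem attaches at a single vertex and I get a subdivided claw in $\mathcal{S}$; if $v_p,v_r$ are consecutive on $Q$, they form a triangle with $s$, yielding a member of $\mathcal{T}$ with pendant paths to $a$, $b$, and (along the stem) $c$; and if they are nonconsecutive, I \emph{reroute} through $s$, replacing the middle of $Q$ by $v_p,s,v_r$ to obtain a new induced $a$--$b$ path through $s$, off which the stem hangs as the third leg of a subdivided claw in $\mathcal{S}$. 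Taking $H$ to be the subgraph induced by exactly the vertices used keeps everything chordless, and the three vertices of $I$ remain the extremities.

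The main obstacle is the degenerate situation where the natural attachment of $c$ reaches $Q$ only at an \emph{endpoint} $a$ (or $b$): then $a$ acquires degree two and would become an interior vertex of a long induced path, so that $\{a,b,c\}$ are no longer all extremities of an $\mathcal{S}$/$\mathcal{T}$/$\mathcal{M}$ graph built with $Q$ as the spine. I expect the resolution to be that this is precisely the configuration producing an $\mathcal{M}$-graph centered at a \emph{different} vertex of $I$: when $s\sim a$, the offending vertex $a$ typically has two neighbors on an induced $b$--$c$ path (one through the stem and one along $Q$), so $a$ plays the role of center. I would therefore run the construction symmetrically over the three choices of which vertex of $I$ is the ``third'' one, and argue that at least one choice avoids the endpoint collision, or else directly exhibits one vertex of $I$ having two neighbors on an induced path between the other two. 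Making this symmetry-plus-rerouting argument fully rigorous---so that some choice always succeeds and the output is genuinely induced with $I$ as its set of extremities---is the delicate part; the rest is bookkeeping enabled by the chordlessness of shortest paths and the independence of $I$.
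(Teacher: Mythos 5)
Your overall architecture is sound and, for most cases, verifiably correct: an induced $a$--$b$ path $Q$ with interior in $C$ exists since $a\not\sim b$ and $C$ is connected; if $c$ has two neighbors on $Q$ you indeed get a member of $\mathcal{M}$ centered at $c$ with extremities $I$; one (necessarily internal) neighbor gives a member of $\mathcal{S}$; and in the stem case the subcases $v_p=v_r$ (automatically internal, since the stem lands in the interior of $Q$), $v_p,v_r$ consecutive and internal (a member of $\mathcal{T}$), and $v_p,v_r$ nonconsecutive (reroute through $s$, which in fact works even when $v_p=a$ or $v_r=b$) all check out. Note that the paper itself does not prove this lemma --- it imports it from \cite{DDHMPT24} --- so your argument must stand alone, and it has two genuine gaps. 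The first is that your ``clean pendant stem'' claim is false as stated: a shortest path $R$ from $c$ to $V(Q)$ inside $G[\{c\}\cup C]$ is a shortest path to $V(Q)\cap C$, i.e., to the \emph{interior} of $Q$, and its shortestness only prevents earlier stem vertices from having neighbors in that interior. It says nothing about adjacency to the endpoints $a,b$, which lie outside $C$; an early stem vertex adjacent to $a$ or $b$ silently destroys the inducedness of every structure you assemble from $V(Q)\cup V(R)$ (the $\mathcal{T}$ outcome, the rerouted claw, and your endpoint fix alike), since all of them contain $a$, $b$, and the whole stem.

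The second gap is the one you flag yourself: the endpoint-collision case is a plan, not a proof. In the one subcase your sketch reaches ($s$ adjacent on $Q$ exactly to $a$ and its successor $a^{+}$), your predicted fix is correct and easy to complete: the concatenation of $b\,Q\,a^{+}$, the edge $a^{+}s$, and the stem from $s$ to $c$ is an induced $b$--$c$ path on which $a$ has exactly the two neighbors $a^{+}$ and $s$ and avoids both endpoints, so adding $a$ yields a member of $\mathcal{M}$ centered at $a$. But once you repair the first gap in the natural way --- truncating the stem at its first vertex $z$ having a neighbor anywhere on $Q$ --- a new subcase appears that none of your tools handles: the unique neighbor of $z$ on $Q$ may be the endpoint $a$ itself, in which case $c\cdots z\,a\,a^{+}\cdots b$ is merely an induced $c$--$b$ path with $a$ as an internal vertex, and within $V(Q)\cup V(R)$ there is no reroute and no center swap available; one has to go back into $C$ for new vertices. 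Your fallback --- rerun the construction over the three choices of distinguished vertex and claim some choice avoids the collision --- is precisely the step that needs an argument (for instance, an extremal choice of the pair and path, or, as is standard for such statements and plausibly the route of the cited proof, working with a minimal connected induced subgraph $C'\subseteq C$ satisfying $I\subseteq N(C')$ instead of a path-plus-stem), and without it the proof is incomplete.
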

Moreover, from the proof of the above lemma (see Lemma 6.1,\cite{BDDHMPT25-arXiv}), we can deduce that $H$ can be found in  $\mathcal{O}(n+m)$ time.

\thmAlg*

\begin{proof}
Let $P$ be an arbitrary path in $G$ and $u, v$ be the extremities of $P$.

The algorithm proceeds recursively as follows.
While $N_{k-1}[P]\neq V(G)$, we construct a path $P'$ such that:
\begin{itemize}
  \item [(i)] $N_{k-1}[P]\subsetneq N_{k-1}[P']$ (corresponding to \cref{cl:extendingNeigh}), or\phantomsection\label{c:ii}
  \item[(ii)] $N_{k-1}[P]\subseteq N_{k-1}[P']$ and $|V(P')| < |V(P)|$  (corresponding to \cref{cl:shorteningP}). \phantomsection\label{c:i}
\end{itemize}

After at most $n$ updates of type \hyperref[c:ii]{(i)}, $N_{k-1}[P]= V(G)$, that is, a path of with eccentricity less than $k$ in $G$ is obtained.
Before an update of type \hyperref[c:ii]{(i)}, we create a nice setting for such update by applying updates of type \hyperref[c:i]{(ii)} as many times as possible (which is at most $n$ times).
If at some point it is not possible to execute an update of type \hyperref[c:ii]{(i)}, we exhibit an induced subgraph isomorphic to either $S_k$, $T_k$, or a graph in $\mathcal{M}_k$.
In what follows, we show that one update takes $\Oh(n+m)$.
This will imply that the running time of the algorithm is $\Oh(n^2(n+m))$.

\begin{claim}\label{cl:shorteningP}
If $u\neq v$, 
then at least one of the following holds.
\begin{itemize}
  \item $N_{k-1}[P-u]=N_{k-1}[P]$,
  \item $N_{k-1}[P-v]=N_{k-1}[P]$, or
  \item there exist two distinct vertices $u'$ and $v'$ in $G$ and two paths $P_u$ and $P_v$ with extremities $u,u'$ and $v,v'$ respectively, such that $V(P_u)\cap V(P_v) = \emptyset$, $V(P_u)\cap V(P) = \{u\}$, 
  $V(P_v)\cap V(P) = \{v\}$, and the sets $V(P_u)$ and $V(P_v)$ each induce a $P_k$ in $G$.  
   Moreover, the only edge between $V(P_u)\setminus \{u\}$ (respectively $V(P_v)\setminus \{v\}$) and $V(P)$ is the edge between $u$ (respectively $v$) and its neighbor along $P_u$ (respectively~$P_v$). 
\end{itemize}  
Distinguishing which of these cases occurs and, in the case of the last outcome, a certificate for it can be found in time $\Oh(n+m)$.
\end{claim}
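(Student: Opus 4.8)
The plan is to reformulate the first two outcomes in terms of the ``private'' $(k-1)$-neighborhoods $A_u \eqdef N_{k-1}[P]\setminus N_{k-1}[P-u]$ and $A_v \eqdef N_{k-1}[P]\setminus N_{k-1}[P-v]$: the equality $N_{k-1}[P]=N_{k-1}[P-u]$ is exactly $A_u=\emptyset$, and similarly for $v$. Hence I may assume both $A_u$ and $A_v$ are nonempty and aim to produce the paths of the third outcome. Fix $z_u\in A_u$; writing $d(\cdot,\cdot)$ for distances in $G$, by definition $d(z_u,u)\le k-1$ while $d(z_u,w)\ge k$ for every $w\in V(P)\setminus\{u\}$, so $u$ is the unique vertex of $P$ within distance $k-1$ of $z_u$.

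The crux of the argument, and the step I expect to be the main obstacle, is to show that a shortest $u$--$z_u$ path has length \emph{exactly} $k-1$, so that it has exactly $k$ vertices and is a genuine candidate for $P_k$ rather than merely a shorter ball-path. The upper bound $d(u,z_u)\le k-1$ is immediate from $z_u\in N_{k-1}[P]$. For the matching lower bound I would use the endpoint structure of $P$: since $u\neq v$, the vertex $u$ has a neighbor $u^{+}$ along $P$, and $u^{+}\in V(P)\setminus\{u\}$, so $k\le d(z_u,u^{+})\le d(z_u,u)+1$, whence $d(z_u,u)\ge k-1$. Thus $d(u,z_u)=k-1$. (This is precisely where $u\neq v$ is needed.)

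Next I would take $P_u$ to be any shortest $u$--$z_u$ path $u=a_0,a_1,\dots,a_{k-1}=z_u$. As a shortest path it is induced, hence induces a $P_k$. For every $w\in V(P)\setminus\{u\}$ and every $i\ge 1$ the reverse triangle inequality gives $d(a_i,w)\ge d(z_u,w)-d(z_u,a_i)\ge k-\bigl((k-1)-i\bigr)=i+1\ge 2$, so no $a_i$ with $i\ge1$ lies on $P$ or is adjacent to any vertex of $V(P)\setminus\{u\}$; since moreover $a_i$ is adjacent to $u$ only when $i=1$, the unique edge between $V(P_u)\setminus\{u\}$ and $V(P)$ is $ua_1$, and $V(P_u)\cap V(P)=\{u\}$. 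Defining $P_v$ symmetrically from some $z_v\in A_v$ (with vertices $b_0=v,\dots,b_{k-1}=z_v$), the same bounds force $V(P_u)\cap V(P_v)=\emptyset$: a common vertex $a_i=b_j$ (necessarily with $i,j\ge1$, as $u\notin V(P_v)$ and $v\notin V(P_u)$) would satisfy $d(a_i,u)=i\ge j+1$ and $d(a_i,v)=j\ge i+1$, which is impossible. In particular $u'\eqdef z_u$ and $v'\eqdef z_v$ are distinct, and the third outcome holds with $V(P_u)$ and $V(P_v)$ each inducing a $P_k$.

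Finally, for the running time, distinguishing the three cases reduces to a constant number of breadth-first searches. A multi-source BFS from $V(P)$ yields $N_{k-1}[P]$; multi-source BFS from $V(P)\setminus\{u\}$ and from $V(P)\setminus\{v\}$, together with single-source BFS from $u$ and from $v$, let me test membership in $A_u$ and $A_v$ in a single vertex scan. If both are nonempty I pick $z_u\in A_u$ and $z_v\in A_v$ and read off $P_u,P_v$ from the BFS trees rooted at $u$ and $v$; otherwise I output whichever of the first two outcomes holds. Each BFS runs in $\Oh(n+m)$ and there are $\Oh(1)$ of them, so the whole step takes $\Oh(n+m)$ time, as claimed.
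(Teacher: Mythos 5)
Your proof is correct and takes essentially the same approach as the paper's: pick a witness vertex in $N_{k-1}[P]\setminus N_{k-1}[P-u]$, note (via the neighbor of $u$ on $P$, which is exactly where $u\neq v$ enters) that it lies at distance exactly $k-1$ from $u$, take a shortest path as $P_u$, verify the intersection and edge conditions by distance inequalities, and implement everything with a constant number of BFS runs in $\Oh(n+m)$ time. The only cosmetic differences are that you establish disjointness of $P_u$ and $P_v$ through exact subpath distances rather than the paper's path-concatenation argument, and you spell out details (the exact distance $k-1$, inducedness of shortest paths) that the paper leaves implicit.
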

\begin{proof}[Proof of \cref{cl:shorteningP}]
  Suppose that $u\neq v$.
  If $N_{k-1}[P-u]\subsetneq N_{k-1}[P]$, then there exists a vertex $u'$ at distance $k-1$ from $u$ such that $u'$ is at distance $k$ from $P-u$. 
  Let $P_u$ be a path witnessing distance $k-1$ between $u'$ and $u$.
  Then $P_u$ induces a $P_k$ in $G$ and is internally disjoint from $P$ since $u'$ is at distance $k$ from $P-u$. 
  Moreover, the only edge between $V(P_u)\setminus \{u\}$ and $V(P)$ is the edge between $u$ and its neighbor along $P_u$.
  If $N_{k-1}[P-v]\subsetneq N_{k-1}[P]$ also holds, we construct similarly $v'$ and $P_v$ with analogous properties.

  Suppose that $P_u$ and $P_v$ intersect in some vertex $x$.
  Without loss of generality, the distance from $u'$ to $x$ along $P_u$ is at most the distance from $v'$ to $x$ along $P_v$. 
  Then, following first $P_u$ between $u'$ and $x$ and then following $P_v$ between $x$ and $v$, we get a path from $u'$ to $v$ at most as long as $P_v$, which has length $k-1$, thus $u'$ is at distance at most $k-1$ from $v\in V(P-u)$, a contradiction.

  The time complexity follows, as we first check whether one of the first two outcomes holds, comparing outcomes of the BFS algorithms starting with $P$, $P-u$, and $P-v$, respectively.
  If all those outcomes are different, they provide us with $u'$ and $v'$ and the respective paths $P_u$ and $P_v$.
\renewcommand\qedsymbol{$\diamondsuit$}
\end{proof}

Assume that $u\neq v$.
Note that if the first or the second case of \cref{cl:shorteningP} holds, then we can replace $P$ by either $P-u$ or $P-v$, and get a path strictly smaller with the same neighborhood at distance $k-1$.
Repeating this at most $|V(P)|<n$ times, we obtain $P'$ such that $N_{k-1}[P']=N_{k-1}[P]$ and $|V(P')|\leq |V(P)|$, and either $P'$ is reduced to a single vertex, or we get the existence of $u',v',P_u,P_v$ as defined in \cref{cl:shorteningP} (with $P'$ in place of $P$). 
We then replace $P$ by $P'$.

We can now assume that either $P$ is reduced to a single vertex, or there exist $u',v',P_u,P_v$ as defined in \cref{cl:shorteningP}.
We can also assume that $N_{k-1}[P]\neq V(G)$, since otherwise $P$ is a path of eccentricity less than $k$ and we can return it.
Thus, there exists a vertex $x$ at distance $k$ from $P$.
Therefore, there is an induced $P_k$ in $G$, denoted $P_x$, with extremities $x$ and $y$ such that $y$ is the only vertex of $P_x$ adjacent to some vertex of $P$.

\begin{claim}\label{cl:extendingNeigh}
Let  \[
S :=
\begin{cases}
  V(P_x)\cup V(P_u)\cup V(P_v), \hfill\text{~~~ if $|V(P)|\neq 1$.}\\
  V(P_x),  \hfill \text{~~~otherwise.}
\end{cases}
\]
  Then, either $S$ induces a $3P_k$ or we can construct a path $P'$ in $S \cup V(P)$ such that $N_{k-1}[P]\subsetneq N_{k-1}[P']$.
A certificate for one of the outcomes can be found in time $\Oh(n+m)$.
\end{claim}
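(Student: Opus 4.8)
The plan is to regard $P_u$, $P_v$, and $P_x$ as three induced copies of $P_k$ and to show that any \emph{contact} between them---a shared vertex, or an edge of $G$ joining two of them---can be converted into the desired reroute of $P$, whereas the total absence of contacts is precisely the assertion that $S$ induces a $3P_k$. Throughout, the target is a path $P'\subseteq S\cup V(P)$ with $V(P)\subseteq V(P')$ (which forces $N_{k-1}[P]\subseteq N_{k-1}[P']$) that in addition contains some vertex at distance at most $k-1$ from $x$; since $x$ lies at distance $k$ from $P$, any such $P'$ satisfies $N_{k-1}[P]\subsetneq N_{k-1}[P']$.

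First I would dispatch the degenerate case $|V(P)|=1$. Here $P=\{u\}$, the vertex $y$ is adjacent to $u$, and appending $u$ to the endpoint $y$ of $P_x$ produces a path containing $u$ and all of $P_x$; as $x$ is then on $P'$, we are done (and $S=V(P_x)$ is never a $3P_k$, consistent with the ``extend'' outcome).

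For $|V(P)|\ge 2$ I first note that $P_x$ is vertex-disjoint from $P$, since its only vertex adjacent to $P$ is the endpoint $y$, which sits at distance $1$ from $P$. I then examine the three pairwise relations of $P_u,P_v,P_x$. If these paths are pairwise vertex-disjoint and no edge of $G$ joins two distinct ones, then $G[S]$ is a disjoint union of three induced copies of $P_k$, so $S$ induces $3P_k$ and we output it. Otherwise I reroute according to the first contact found, taking the cases in the order below so that in each one the remaining hypotheses may be assumed:
\begin{itemize}
  \item \emph{$P_x$ meets $P_u$ (or $P_v$) in a vertex $z$:} extend $P$ beyond its endpoint $u$ along $P_u$, obtaining $P'$ with $V(P')=V(P_u)\cup V(P)\supseteq V(P)$; then $z\in V(P_x)\cap V(P')$ witnesses $d_G(x,z)\le k-1$.
  \item \emph{an edge joins $P_x$ to $P_u$ (or $P_v$), the two paths being vertex-disjoint:} follow $P_x$ from $x$ to the vertex of $P_x$ incident to that edge, cross the edge into $P_u$, continue along $P_u$ to $u$, and append $P$; this path contains $x$ itself together with all of $V(P)$.
  \item \emph{an edge joins $P_u$ to $P_v$:} the subpaths of $P_u$ and of $P_v$ running from $u$ and from $v$ up to this edge, together with $P$, close into a cycle through $u$ and $v$ that spans all of $V(P)$; inserting $y$ at its neighbour $w$ on this cycle and prepending $P_x$ yields the required $P'$.
\end{itemize}

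The main obstacle, and the point where the argument departs from the $3P_2$ reasoning of \Cref{thm:longest-path}, is that $P'$ must retain \emph{every} vertex of $P$, so we may not simply delete the endpoints $u,v$ as one does when optimising length; this is exactly what forces the cycle construction in the third case rather than a naive detour. Verifying that each reroute is a simple path inside $S\cup V(P)$ is then routine bookkeeping, resting on the disjointness and ``no spurious edge to $P$'' guarantees of \Cref{cl:shorteningP} and on $V(P_x)\cap V(P)=\emptyset$. Finally, deciding which relation holds is a single pass over the edges incident to the $\Oh(n)$ vertices of $P_u\cup P_v\cup P_x$, and building $P'$ (including the cyclic traversal) is linear, so a certificate for one of the two outcomes is produced in time $\Oh(n+m)$.
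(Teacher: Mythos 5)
Your proposal is correct and follows essentially the same route as the paper's proof: the same dichotomy (no contact among $P_x$, $P_u$, $P_v$ certifies an induced $3P_k$; any shared vertex or crossing edge yields a reroute of $P$ that keeps all of $V(P)$ and reaches $x$), the same three-case analysis in the same order, and the same underlying construction in the $P_u$--$P_v$ edge case. The only minor variations are that in the vertex-intersection case you extend $P$ along all of $P_u$ and invoke $d_G(x,z)\le k-1$ instead of routing through $P_x$ to place $x$ itself on the path (the paper picks the intersection vertex of $P_x$ closest to $x$ for this), and your cycle phrasing of the last case quietly absorbs the paper's explicit observation that $w\neq u$; both variations are valid.
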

\begin{proof}[Proof of \cref{cl:extendingNeigh}]
  If $P$ is reduced to a single vertex, then, by the construction of $P_x$, the set $V(P)\cup V(P_x)$ induces a path that contains $V(P)\cup \{x\}$, implying that $N_{k-1}[P]\subsetneq N_{k-1}[P']$, since $x \notin N_{k-1}[P]$, so the claim holds.

  Suppose now that $u \neq v$ (thus $u',v',P_u$, and $P_v$ are defined as in \cref{cl:shorteningP}).
  Recall that each path $P_u, P_v, P_x$ taken individually is an induced $P_k$ in $G$.
  If $V(P_x)\cup V(P_u)\cup V(P_v)$ induces a $3P_k$ then return this outcome.
  Otherwise, this means that either they share some vertices or that there exists an edge with endpoints in two of those three paths. 
  In each case, we will construct a path $P'$ in $V(P_x)\cup V(P_u)\cup V(P_v)\cup V(P)$ that contains   $V(P)\cup \{x\}$, implying that $N_{k-1}[P]\subsetneq N_{k-1}[P']$, since $x \notin N_{k-1}[P]$.

  Suppose first that $P_x$, $P_u$, and $P_v$ are not vertex-disjoint.
  Since by \cref{cl:shorteningP}, $P_u$ and $P_v$ are vertex-disjoint, $V(P_x)$ intersects $V(P_u)\cup V(P_v)$. 
  Let $y'$ be the vertex of $P_x$ closest to $x$ along 
  $P_x$ that belongs to $V(P_u)\cup V(P_v)$. 
  By symmetry, we may assume that $y'\in V(P_u)$, and we denote $P_x'$ and $P_u'$ the respective subpaths of $P_x$ with extremities $x,y'$ and of $P_u$ with extremities $y',u$. Then we construct our wanted $P'$ as the concatenation of $P_x'$, $P_u'$, and $P$. We can thus assume that $V(P_x)$, $V(P_u)$, and $V(P_v)$ are pairwise disjoint.

  If there is an edge $e$ between a vertex $y'\in V(P_x)$ and a vertex $u''\in V(P_u)$, we define $P_x'$ and $P_u'$ as the respective subpaths of $P_x$ with extremities $x,y'$ and of $P_u$ with extremities $u'',u$. Then we construct our wanted $P'$ as the concatenation of $P_x'$, $e$, $P_u'$, and $P$.
  The case where $e$ is between a vertex of $P_x$ and $P_v$ is obtained by symmetry. We can now assume that there is no edge between $V(P_x)$ and $V(P_u)\cup V(P_v)$.

  Finally, suppose there is an edge $e$ between a vertex $u''\in V(P_u)$ and a vertex $v''\in V(P_v)$, we define $P_u'$ and $P_v'$ as the respective subpaths of $P_u$ with extremities $u'',u$ and of $P_v$ with extremities $v,v''$. Furthermore, let $w$ be a neighbor of $y$ in $P$. Since there is no edge between $V(P_u)$ and $V(P_x)$, we have $w\neq u$. Thus, there exists $w'$, the neighbor of $w$ in $P$ toward $u$. Let $Q$ and $Q'$ be two subpaths of $P$ with extremities $w,v$ and $u,w'$ respectively; note that $V(Q)\cup V(Q')=V(P)$.
  Then we construct our wanted $P'$ as the concatenation of $P_x$, $yw$, $Q$, $P_v'$, $e$, $P_u'$, and $Q'$.

  The time complexity follows, as any edge or a shared vertex can be discovered within $S$ by scanning the closed neighborhood of $P_x$ and $P_u$, in which case we can output $P'$.
\renewcommand\qedsymbol{$\diamondsuit$}
\end{proof}

Now suppose that \cref{cl:extendingNeigh} returned the outcome that $S=V(P_x)\cup V(P_u)\cup V(P_v)$ induces a $3P_k$. 
Let $G'=G[V(P) \cup \{y\}]$.
Then $I=\{u,v,y\}$ is an independent set in $G'$, since $S$ induces a $3P_k$.  
Let $C$ be the subgraph of $G'$ induced by $V(P)\setminus I$. Then $I\subset N(C)$ and by \cref{lem:STM} the graph $G'[N[C]]$ contains an induced subgraph $H' \in \mathcal{S} \cup \mathcal{T} \cup \mathcal{M}$ with extremities $\{u,v,y\}$.
Thus, $H=G'[V(H')\cup S]$ either
\begin{itemize}
  \item belongs to $\mathcal{M}_k$ if $H'\in \mathcal{M}$,
  \item contains an $S_k$ as an induced subgraph if $H'\in \mathcal{S}$,
  \item or contains a $T_k$ as an induced subgraph if $H'\in \mathcal{T}$.
\end{itemize}

By applying \cref{cl:shorteningP} and \cref{cl:extendingNeigh}, we get one of the outcomes \hyperref[c:ii]{(i)} or \hyperref[c:i]{(ii)} (or we exhibit the wanted induced subgraph $H$) and, as discussed, the number of such steps is at most $n^2$.
The running time of each individual step is bounded by $\mathcal{O}(m+n)$, as stated in the respective claims.
The claimed time complexity follows.
\end{proof}
}

\section{Concluding Remarks and Open Questions}\label{sec:conclusion}

Our main result, \cref{thm:hereditary-path-eccentricity}, shows that for every integer $k \ge 1$, each connected $\{S_{k},T_{k}\}$-free graph has path eccentricity less than $k$.
This raises the question regarding the complexity of computing the path eccentricity of such a graph.
While the problem is clearly polynomial for $k = 1$, it turns out that the problem is \textsf{NP}-hard for all $k\ge 2$.
This follows from the fact that the problem of determining whether a given graph has a Hamiltonian path is \textsf{NP}-complete in the class of split graphs~\cite{MR1405031}, which are known to be $2P_2$-free and, hence, $\{S_{2},T_{2}\}$-free.
This, however, says nothing about the complexity of computing a path witnessing the upper bound on path eccentricity guaranteed by \cref{thm:hereditary-path-eccentricity}.

\begin{question}\label{question-algo}
Is there an algorithmic version of \Cref{thm:hereditary-path-eccentricity}?
More precisely: for a fixed integer $k \ge 1$ and a connected $\{S_{k},T_{k}\}$-free graph $G$, can we find in polynomial time a path in $G$ with eccentricity less than $k$?
\end{question}

While \Cref{question-algo} is known to have an affirmative answer for $k = 1$ (see~\cite{Shepherd1991,Brandstadt2000}), it is, to the best of our knowledge, open for all $k\ge 2$.
\Cref{thm:3P2} provides a partial answer.
\cv{In fact, even a weaker version of \cref{question-algo}, finding a path of eccentricity less than $k$ in cases covered by \cref{cor:H-freeGeneral}, is open, except for the case of $3P_2$-free graphs, which is covered by \cref{thm:longest-path}.}

Another result of this paper (\cref{thm:longest-path}) is that every connected $3P_{2}$-free graph $G$ has a longest path that is dominating.
This result, along with the observation that there exist graphs that are not $3P_2$-free such that every connected induced subgraph has a longest path that is dominating (consider, for instance, paths and cycles), raises the question about a characterization of such graphs.

\begin{question}
What are the graphs $G$ such that every connected induced subgraph of $G$ has a longest path that is dominating?
\end{question}

Camby and Schaudt showed in~\cite{Camby2016} that for every integer $k \geq 4$, every connected $P_k$-free graph $G$, and every \textsl{minimum} connected dominating set $S$ in~$G$, the subgraph of $G$ induced by $S$ is either $P_{k-2}$-free or isomorphic to~$P_{k-2}$.
Can some similar properties regarding minimum connected dominating sets (at distance) be proved for $\{S_{k},T_{k}\}$-free graphs?
In particular:

\begin{question}\label{question-minimum}
Is it true that for every integer $k\ge 2$, every connected $\{S_{k},T_{k}\}$-free graph has a minimum connected distance-$(k-1)$ dominating set inducing a $\{S_1,T_1\}$-free graph?
\end{question}
}%

\lv{
\subsection*{Acknowledgements}

The authors are grateful to Ulrich Pferschy for pointing out reference~\cite{Antony2024}, which motivated the present research.
}

\bibliographystyle{alphaurl}
\bibliography{biblio}
\newpage
\appendix 
\appendixText

\end{document}